\title{Properties of solutions to some weighted $p$-Laplacian equation}
\author{
        Prashanta Garain
        }
\newtheorem{The}{Theorem}[section]
\newtheorem{Lem}{Lemma}[section]
\newtheorem{Rem}{Remark}[section]
\newtheorem{Def}{Definition}[section]
\newenvironment{AMS}{}{}
\newenvironment{keywords}{}{}
\begin{document}
\newpage
\maketitle

\begin{abstract}
In this paper, we prove some qualitative properties for the positive solutions to some degenerate elliptic equation 
given by \[-\operatorname{div}(w|\nabla u|^{p-2}\nabla u)=f(x,u);\;\;w\in \mathcal{A}_p\] on smooth domain 
and for varying nonlinearity $f$.
\end{abstract}

\begin{keywords}
\textbf{Keywords:} p-Laplacian; Degenerate elliptic equations; Weighted Sobolev space
\end{keywords}

\begin{AMS}
\textbf{2010 Subject Classification} 35J70;35J62; 35A01
\end{AMS}
\section*{Introduction}
In this paper we are interested in studying the non-negative solutions to the degenerate elliptic equation given by 
\[-\Delta_{p,w}u=f(x,u);\;\;w\in \mathcal{A}_p\] where the operator $-\Delta_{p,w}$ is defined by \[-\Delta_{p,w}u=-\operatorname{div}(w|\nabla u|^{p-2}\nabla u)\] 
either in whole space or in smooth and bounded domain $\Omega(\subset \mathbb{R}^N)$ for varying degree of nonlinearity.  

We start by proving some preliminary result which guarantees the existence of certain test function which is an important ingredient in proving the main results. 
In our case, because of the weight, desirable regularity results are not readily available. Hence we prove some lemmas to overcome this difficulty.
Then for a wide class of functions $f$ satisfying some conditions we prove a non-existence result, comparison principle, Hardy-type inequality and Liouville theorem.
We also study the eigenvalue problem corresponding to the operator $-\Delta_{p,w}$ proving results such as simplicity and monotonicity. 
Beginning with the paper of Fabes et al \cite{EFabes} where a local H\"older regularity result along with some maximum principle and Poincar\'e type inequalities are proved for Muckenhoupt weights there 
has been a huge surge in interest to study problems on degenerate elliptic operators with Muckenhoupt weights. Related results on  weighted Poincar\'e and Sobolev inequalities was obtained by Chanillo and Wheeden \cite{ChWe}  .
In De Cicco-Vivaldi \cite{DeVi} a Liouville theorem was proved for the weight $w(x)=|x|^r$ with $r>-N$ and $N>2$. Related degenerate eigenvalue problem was studied in Kawohl et al \cite{KLP}.
For more information on this field one can refer to Heinonen et al \cite{JTO}.

The goal of our paper is to improve and complement the previously obtained results where ever possible.
Throughout the paper, we restrict the weight function in the class $\mathcal{A}_p$ (defined below) and choose $\Omega$ to be any bounded smooth domain in $\mathbb{R}^N$ where $N\geq 1$ and $p>1$ unless otherwise stated.   

\section{Preliminaries}
We begin this section by presenting some facts about the weighted Sobolev space. 
\begin{Def} 
A weight function $w$ on any domain $\Omega$ of $\mathbb{R}^N$ is a real-valued function which is measurable and positive a.e.
\end{Def}
\begin{Def} The class of weights $\mathcal{A}_p$ is defined by
\[
\mathcal{A}_p=\big\{a(x)\;\text{is a weight}: a\in L^1_{loc}(\Omega),\;a^{-\frac{1}{p-1}}\in L^1_{loc}(\Omega) 
\big\}
\]
\end{Def}
\begin{itemize}
\item $w(x)=|x|^\alpha\in \mathcal{A}_p$ if $-N<\alpha<N(p-1)$.
\end{itemize}

\begin{Def}(Weighted Sobolev space)
For $w\in\mathcal{A}_p$, we define the weighted Sobolev space $W^{1,p}(\Omega,w)$ to be the set of all real-valued functions $u$ defined a.e. on $\Omega$ for which 
\begin{equation}\label{norm1}
||u||_{1,p,w}=(\int_{\Omega}|u(x)|^{p}dx+\int_{\Omega}w(x)|\nabla u(x)|^{p}dx)^\frac{1}{p}<+{\infty}.
\end{equation}
\end{Def}

\begin{Rem}
The fact $w\in L^{1}_{loc}(\Omega)$ implies $C_{c}^{\infty}(\Omega)\subset W^{1,p}(\Omega,w)$. As a consequence one can introduce the space 
$$
W_{0}^{1,p}(\Omega,w)=\overline{\{C_{c}^{\infty}(\Omega),||.||_{1,p,w}\}}
$$
Moreover, for $w\in \mathcal{A}_p$ both the spaces $W^{1,p}(\Omega,w)$ and $W_{0}^{1,p}(\Omega,w)$ are uniformly convex (and hence reflexive) Banach space, for details see Drabek et al \cite{Ak}.
\end{Rem}
Unfortunately to prove our results suitable embedding theorems were not available for weights in 
$\mathcal{A}_p$ class. Hence we start by defining some more subclass.
\begin{Def}
Define a new sub-class of $\mathcal{A}_p$ as follows: 
$$
\mathcal{A}_s=\big\{a(x)\in \mathcal{A}_{p}:\;\mbox{for some s}\geq\frac{1}{p-1},\;a^{-s}\in L^1(\Omega) \big\}
$$
\end{Def} 
\begin{itemize}
\item For $p>k\geq 1$, $s=\frac{1}{p-k}$, we have $|x|^\alpha\in \mathcal{A}_s$ if $-N<\alpha<N(p-k)$.
\end{itemize}

\begin{Rem}
If we denote by $p_s=\frac{ps}{s+1}$ for $s\geq\frac{1}{p-1}$.
\end{Rem}
Regarding the embedding we have the following Lemma which follows from Drabek et al \cite{Ak}.
\begin{Lem}(Embedding Results)\label{embedding theorem}\\
\begin{itemize}
\item For any $w \in A_s$, we have the continuous inclusion map
\[
    W^{1,p}(\Omega,w)\hookrightarrow W^{1,p_s}(\Omega)\hookrightarrow 
\begin{cases}
    L^q(\Omega),& \text{for } q\in[p_s,p_s^{*}], \text{in case of } 1\leq p_s<N \\
    L^q(\Omega),& \text{for } q\in[p_s,\infty), \text{in case of } p_s=N \\
    C^{0,\alpha}(\overline{\Omega}),& \text{in case of } p_s>N.
\end{cases}
\]
for some $\alpha>0$ and $p_s^{*} = \frac{Np_s}{N-p_s}$.

\item Moreover, these are compact except for $q=p_s^{*}$ in case of $1\leq p_s<N$. 
\item The same result holds for the space $W_{0}^{1,p}(\Omega,w)$.

\end{itemize}
\end{Lem}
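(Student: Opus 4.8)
The plan is to reduce everything to the single continuous inclusion $W^{1,p}(\Omega,w)\hookrightarrow W^{1,p_s}(\Omega)$, since once this is in hand the remaining arrows into the target spaces $L^q(\Omega)$ and $C^{0,\alpha}(\overline\Omega)$, together with the compactness assertions, are precisely the classical (unweighted) Sobolev embedding theorem and the Rellich--Kondrachov theorem applied to $W^{1,p_s}(\Omega)$ on the bounded smooth domain $\Omega$; the failure of compactness at $q=p_s^{*}$ when $1\le p_s<N$ is the usual critical-exponent phenomenon. So the whole content lies in producing, for $u\in W^{1,p}(\Omega,w)$, a bound on $\|u\|_{W^{1,p_s}(\Omega)}$ by $\|u\|_{1,p,w}$ times a constant depending only on $\Omega$, $p$, $s$ and $\|w^{-s}\|_{L^1(\Omega)}$.

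First I would control the gradient. Writing $|\nabla u|^{p_s}=\big(w|\nabla u|^p\big)^{p_s/p}\,w^{-p_s/p}$ and applying Hölder's inequality with the conjugate pair $r=p/p_s$ and $r'=p/(p-p_s)$ gives
\[
\int_\Omega|\nabla u|^{p_s}\,dx\le\Big(\int_\Omega w|\nabla u|^p\,dx\Big)^{p_s/p}\Big(\int_\Omega w^{-\frac{p_s}{p}r'}\,dx\Big)^{1/r'}.
\]
The point of the definitions is the arithmetic identity $\frac{p_s}{p}r'=\frac{p_s}{p-p_s}=s$, which uses $p-p_s=\frac{p}{s+1}$ coming from $p_s=\frac{ps}{s+1}$; note also that then $r'=s+1$. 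Hence the last factor is exactly $\big(\int_\Omega w^{-s}\,dx\big)^{1/(s+1)}$, which is finite precisely because $w\in\mathcal{A}_s$. This is the crux of the lemma: the exponent $p_s$ is engineered so that the weight surviving after Hölder is exactly the integrable power $w^{-s}$.

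For the zeroth-order term I would simply use that $\Omega$ is bounded and that $p_s=\frac{ps}{s+1}\le p$, so that $L^p(\Omega)\hookrightarrow L^{p_s}(\Omega)$ with constant $|\Omega|^{1/p_s-1/p}$, giving $\|u\|_{L^{p_s}(\Omega)}\le|\Omega|^{1/p_s-1/p}\|u\|_{L^p(\Omega)}$. Combining this with the gradient estimate yields the desired continuous inclusion into $W^{1,p_s}(\Omega)$. The statement for $W_0^{1,p}(\Omega,w)$ then follows by density: the estimate just established is uniform on $C_c^\infty(\Omega)$, so it passes to the closure and maps $W_0^{1,p}(\Omega,w)$ continuously into $W_0^{1,p_s}(\Omega)$, after which the same classical embeddings and compactness results apply. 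I do not expect a genuine obstacle here beyond bookkeeping; the only delicate point is the choice of Hölder exponents, and even that is forced once one notices that the sole integrability available on the weight is $w^{-s}\in L^1(\Omega)$.
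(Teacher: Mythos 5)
Your proposal is correct, and it is essentially the standard argument: the paper itself offers no proof of this lemma, simply attributing it to Dr\'abek--Kufner--Nicolosi \cite{Ak}, and the H\"older decomposition $|\nabla u|^{p_s}=\bigl(w|\nabla u|^p\bigr)^{p_s/p}w^{-p_s/p}$ with conjugate exponents $p/p_s$ and $s+1$ is exactly the computation carried out there; the identity $\tfrac{p_s}{p}(s+1)=s$ is indeed the whole point of the definition of $p_s$. Two small things worth making explicit: the hypothesis $s\ge\tfrac{1}{p-1}$ in the definition of $\mathcal{A}_s$ is precisely what guarantees $p_s\ge 1$, so that $W^{1,p_s}(\Omega)$ is a genuine Sobolev space and the classical embedding and Rellich--Kondrachov theorems apply; and in the case $p_s>N$ the compactness of the inclusion into $C^{0,\alpha}(\overline{\Omega})$ requires taking $\alpha$ strictly below the Morrey exponent $1-N/p_s$, which is consistent with the lemma's ``for some $\alpha>0$.''
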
 

\begin{Def}
We define a new class of weights 
$$
\mathcal{A}_{t}=\{a(x)\in \mathcal{A}_{p}:\text{for some}\;s\in(\frac{N}{p},\infty)\cap[\frac{1}{p-1},\infty),\;a^{-s}\in L^1(\Omega)\}
$$
\end{Def}
\begin{itemize}
\item Assume $p>2N$. Then for $s=\frac{N}{p-2N}$ we have $p_s>N$. Hence $|x|^\alpha\in \mathcal{A}_t$ with $p_s>N$ if $-N<\alpha<p-2N$.
\end{itemize}
\begin{Rem} 
 For $w\in \mathcal{A}_t$, we have the compact embedding $W^{1,p}(\Omega,w)\hookrightarrow L^{p+\eta}(\Omega)\label{12}$ for $0\leq\eta<p_s^{*}-p$. 
 The same holds, if we replace the space $W^{1,p}(\Omega,w)$ by $W_{0}^{1,p}(\Omega,w)$.
\end{Rem}
For more information regarding the weighted Sobolev space see \cite{Ak, JTO} and the references therein.

\textbf{Notation:}\\
(i) We denote the space $W_{0}^{1,p}(\Omega,w)$ by $X$.\\
(ii) For any set $S$ we denote by $S^+$ the set of all nonnegative elements in $S$.\\
(iii) $|S|$ will denote the Lebesgue measure of $S$.\\
(iv) We write $C$ to denote a positive constant which may vary from line to line or even in the same line depending on the situation.

\section{Essential Lemmas}
We start this section by proving some lemmas.\\
For the rest of the paper, we will assume $h\in \mathbb{Q}$ and  $f\in\mathbb{M}$ unless otherwise stated where $\mathbb{Q}$ and $\mathbb{M}$ are defined as follows:
\begin{itemize}
\item $\mathbb{Q}=\big\{ g:(0,\infty)\to(0,\infty):g\;\;\text{is a}\;\;C^1\;\;\text{function}\big\}$
\item $\mathbb{M}=\big\{ f\in \mathbb{Q}: f'(y)\geq (p-1)[f(y)]^{\frac{p-2}{p-1}}]\big\}$
\end{itemize}
Clearly $f(x)=(x+c)^{p-1}$ belongs to $\mathbb{M}$ for any $c\geq 0$ and the equality is achieved. Other example of functions in $\mathbb{M}$ include $g(x)=e^{(p-1)x}$.

\begin{Lem}(Picone Identity)\label{GPI}
Let $\Omega$ be any domain in $\mathbb{R}^N$ and $u\geq{0},\;v>{0}$ in $\Omega$ be differentiable functions and $w$ be a weight function defined in $\Omega$. Define
\begin{gather*}
L(u,v)=w(x)(|\nabla{u}|^{p}-\frac{pu^{p-1}}{f(v)}|\nabla{v}|^{p-2}\nabla{u}.\nabla{v}+\frac{u^{p}f^{'}(v)}{(f(v))^{2}}|\nabla{v}|^{p})\\
R(u,v)=w(x)(|\nabla{u}|^{p}-\nabla(\frac{u^p}{f(v)})|\nabla{v}|^{p-2}\nabla{v})
\end{gather*}
Then, we have $$L(u,v)=R(u,v)\geq{0}\;in\;\;\Omega.$$\\
Moreover, we have $$L(u,v)=0\;\;\iff\;\;u=cv+d\;\mbox{for some constants $c,d.$}$$
\end{Lem}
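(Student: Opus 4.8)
The plan is to verify the pointwise identity $L=R$ first by a direct computation, then to establish $R(u,v)\ge 0$ (equivalently $L\ge 0$) by combining the structural hypothesis $f\in\mathbb{M}$ with an elementary convexity inequality, and finally to analyze the equality case. I would begin with the identity. By the product and quotient rules,
\[
\nabla\Big(\frac{u^p}{f(v)}\Big)=\frac{p\,u^{p-1}}{f(v)}\nabla u-\frac{u^p f'(v)}{(f(v))^2}\nabla v,
\]
and contracting this with $|\nabla v|^{p-2}\nabla v$, using $|\nabla v|^{p-2}\nabla v\cdot\nabla v=|\nabla v|^{p}$, reproduces exactly the three terms defining $L$ once inserted into the formula for $R$. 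Hence $L=R$; this step is purely algebraic and presents no difficulty.

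For nonnegativity, since $w>0$ a.e.\ it suffices to show the bracket in $L$ is nonnegative. I would first split off the contribution of the hypothesis on $f$ by writing
\[
\frac{u^p f'(v)}{(f(v))^2}=(p-1)\frac{u^p}{(f(v))^{p/(p-1)}}+u^p\Big(\frac{f'(v)}{(f(v))^2}-(p-1)(f(v))^{-p/(p-1)}\Big),
\]
where the last parenthesis is nonnegative precisely because $f\in\mathbb{M}$ (the inequality $f'\ge (p-1)f^{(p-2)/(p-1)}$ is equivalent to it after dividing by $(f(v))^2$ and simplifying the exponent $(p-2)/(p-1)-2=-p/(p-1)$). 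This rewrites $L/w$ as a Picone-type expression
\[
|\nabla u|^p-\frac{p\,u^{p-1}}{f(v)}|\nabla v|^{p-2}\nabla u\cdot\nabla v+(p-1)\frac{u^p}{(f(v))^{p/(p-1)}}|\nabla v|^p
\]
plus a nonnegative remainder. On the displayed expression I would apply Cauchy--Schwarz, $\nabla u\cdot\nabla v\le|\nabla u|\,|\nabla v|$, which is legitimate since its coefficient is $\le0$, and then Young's inequality $A^p+(p-1)B^p\ge pAB^{p-1}$ with $A=|\nabla u|$ and $B=\frac{u}{(f(v))^{1/(p-1)}}|\nabla v|$; a check of exponents shows the three terms match, giving $L\ge0$.

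Finally, the equality case is where the real work lies. Since $L=0$ forces both the nonnegative remainder and the Picone-type expression to vanish, I would extract three conditions: equality in Cauchy--Schwarz (so $\nabla u$ and $\nabla v$ are parallel and codirectional), equality in Young's inequality ($A=B$, i.e.\ $|\nabla u|=\frac{u}{(f(v))^{1/(p-1)}}|\nabla v|$), and vanishing of the remainder, which at points where $u>0$ and $\nabla v\ne0$ means equality holds in the defining inequality of $\mathbb{M}$. The first two combine to $\nabla u=\frac{u}{(f(v))^{1/(p-1)}}\nabla v$, while equality in $\mathbb{M}$ integrates the ODE $f'=(p-1)f^{(p-2)/(p-1)}$ to $f(v)=(v+k)^{p-1}$; substituting yields $\nabla u=\frac{u}{v+k}\nabla v$, equivalently $\nabla\log u=\nabla\log(v+k)$, whence $u=c(v+k)=cv+d$. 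The main obstacle I anticipate is carrying out this equality analysis rigorously across the set where $u=0$ or $\nabla v=0$, and reconciling the constants $c,d$ with the constant $k$ produced by the $\mathbb{M}$-equality; the converse implication likewise needs $f$ to be exactly of the form $(v+k)^{p-1}$ for $L=0$ to hold identically.
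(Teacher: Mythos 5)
Your proposal follows essentially the same route as the paper: verify $L=R$ by expanding the gradient of $u^p/f(v)$, then obtain nonnegativity by combining Cauchy--Schwarz on the cross term with Young's inequality applied to $A=|\nabla u|$ and $B=u|\nabla v|(f(v))^{1/(1-p)}$ together with the defining inequality of $\mathbb{M}$, which is exactly the paper's decomposition. The only difference is in the equality case, where the paper simply cites Allegretto--Huang while you carry out the analysis explicitly (equality in Cauchy--Schwarz, in Young, and in the $\mathbb{M}$-condition forcing $f(v)=(v+k)^{p-1}$ and $u=c(v+k)$); your observation that the stated ``iff'' really requires $f$ to be of this exact form is a fair and correct caveat that the paper glosses over.
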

\begin{proof}
By expanding $R(u,v)$ we get the equality $L(u,v)=R(u,v)$.\\
Note that, 
\begin{multline*}
L(u,v)=w(x)(|\nabla u|^p-\frac{p u^{p-1}}{f(v)}|\nabla u||\nabla v|^{p-1}+\frac{u^{p} f^{'}(v)}{(f(v))^2}|\nabla v|^p)\\
+w(x)\frac{pu^{p-1}}{f(v)}(|\nabla u||\nabla v|^{p-1}-|\nabla v|^{p-2}\nabla u.\nabla v)
\end{multline*}
Applying Young's Inequality with $|\nabla u|$ and $u|\nabla v| (f(v))^\frac{1}{1-p}$,
we get $L(u,v)\geq 0$.\\
Following the exact proof of Allegretto-Huang \cite{AlHu}, we have that the equality holds iff $u=cv+d$ for some constants $c$ and $d$. 
\end{proof}
\begin{Rem}
For $p=2$ and $w=1$, we get back Theorem 1.1 of Tyagi \cite{JT} and rectifies Theorem 2.1 of Bal \cite{KB} for
$p>1$.
\end{Rem}
\begin{Lem}\label{uselemma}
For $\phi\in [C_{c}^\infty(\Omega)]^{+}$ and $v\in{C}(\Omega)\cap W^{1,p}(\Omega,w)$ such that $v>0$ in $\Omega$, we have $\frac{\phi^p}{h(v)}\in X^{+}$, provided $w\in\mathcal{A}_p$.
\end{Lem}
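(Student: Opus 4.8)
The plan is to fix $\phi\in[C_c^\infty(\Omega)]^{+}$ with compact support $K=\operatorname{supp}\phi\subset\Omega$ and set $g=\frac{\phi^p}{h(v)}$. Nonnegativity of $g$ is immediate since $\phi\geq 0$ and $h$ takes values in $(0,\infty)$, so the real content is to show $g\in W_0^{1,p}(\Omega,w)$. First I would record the pointwise bounds coming from continuity and compactness: because $v$ is continuous and strictly positive, $v(K)$ is a compact subset $[a,b]\subset(0,\infty)$, and since $h\in\mathbb{Q}$ is $C^1$, the quantities $h(v)$ and $h'(v)$ are bounded on $K$ with $h(v)$ bounded below by a positive constant. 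Together with the boundedness of $\phi$ and $\nabla\phi$ and the containment $\operatorname{supp}\phi\subset K$, this already gives $g\in L^\infty(\Omega)$ with $\operatorname{supp}g\subset K$, so in particular $g\in L^p(\Omega)$.

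Next I would establish the weak gradient, and this is exactly where the hypothesis $w\in\mathcal{A}_p$ is used. Since $w^{-1/(p-1)}\in L^1_{loc}(\Omega)$, Hölder's inequality on $K$ gives
\[
\int_K|\nabla v|\,dx\leq\Big(\int_K w|\nabla v|^p\,dx\Big)^{1/p}\Big(\int_K w^{-1/(p-1)}\,dx\Big)^{(p-1)/p}<\infty,
\]
so $v\in W^{1,1}_{loc}(\Omega)$. Because $y\mapsto 1/h(y)$ is $C^1$ on $[a,b]$, the chain rule for $C^1$ compositions with $W^{1,1}$ functions yields $\frac{1}{h(v)}\in W^{1,1}_{loc}(\Omega)$ with $\nabla\frac{1}{h(v)}=-\frac{h'(v)}{(h(v))^2}\nabla v$, and multiplying by the smooth, compactly supported $\phi^p$ produces
\[
\nabla g=\frac{p\phi^{p-1}\nabla\phi}{h(v)}-\frac{\phi^p\,h'(v)}{(h(v))^2}\nabla v
\]
as the weak gradient of $g$, an $L^1$ function supported in $K$. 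The weighted integrability then follows from the bounds above: pointwise on $K$ one has $|\nabla g|\leq C_1+C_2|\nabla v|$ with constants depending only on the sup-bounds of $\phi,\nabla\phi,h(v),h'(v)$ on $K$, whence
\[
\int_\Omega w|\nabla g|^p\,dx\leq C\int_K w\,dx+C\int_K w|\nabla v|^p\,dx<\infty,
\]
using $w\in L^1_{loc}$ and $v\in W^{1,p}(\Omega,w)$. Hence $g\in W^{1,p}(\Omega,w)$.

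Finally, to upgrade membership from $W^{1,p}(\Omega,w)$ to $X=W_0^{1,p}(\Omega,w)$ I would exploit that $g$ has compact support in $\Omega$ and approximate it in the norm $\|\cdot\|_{1,p,w}$ by elements of $C_c^\infty(\Omega)$. The natural construction is to mollify $v$, setting $v_\delta=v*\rho_\delta$ (well defined near $K$ for small $\delta$) and $g_\delta=\frac{\phi^p}{h(v_\delta)}$; since $v_\delta\to v$ uniformly on $K$, both the $L^p$ norms and the first gradient term converge, and the outstanding task is the convergence of the second term in $L^p(w\,dx)$. I expect this last step to be the main obstacle: because the weights in $\mathcal{A}_p$ are only assumed locally integrable together with $w^{-1/(p-1)}$ (and not Muckenhoupt), convergence of mollified gradients in the weighted space $L^p(w\,dx)$ is not automatic, and securing it requires precisely the local integrability of $w$ and $w^{-1/(p-1)}$, equivalently the density of compactly supported weighted Sobolev functions available from Drabek et al \cite{Ak}. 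Once this approximation is in place, $g$ is a limit of $C_c^\infty(\Omega)$ functions in $\|\cdot\|_{1,p,w}$, so $g\in X$; combined with $g\geq 0$ this gives $g\in X^{+}$.
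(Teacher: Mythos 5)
Your core estimates are exactly those of the paper's proof: continuity of $h(v)$ and $h'(v)$ on $K=\operatorname{supp}\phi$ gives the bounds $h(v)\geq\delta>0$ and $|h'(v)|\leq c$ on $K$, and from these the finiteness of $\int_\Omega|\phi^p/h(v)|^p\,dx$ and of $\int_\Omega w|\nabla(\phi^p/h(v))|^p\,dx$ follows by the same pointwise domination $|\nabla g|\leq C_1|\nabla\phi|+C_2|\nabla v|$ on $K$. Where you go beyond the paper is in two places, and both are worth noting. First, you justify that the formal expression for $\nabla(\phi^p/h(v))$ really is the weak gradient, by using $w^{-1/(p-1)}\in L^1_{loc}(\Omega)$ and H\"older's inequality to get $v\in W^{1,1}_{loc}(\Omega)$ and then applying the $C^1$ chain rule; the paper simply differentiates without comment, so your step makes explicit the only place the hypothesis $w\in\mathcal{A}_p$ actually enters. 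Second, you correctly observe that finiteness of $\|g\|_{1,p,w}$ together with compact support only places $g$ in $W^{1,p}(\Omega,w)$, whereas the lemma asserts $g\in X=W_0^{1,p}(\Omega,w)$, which by definition requires approximation by $C_c^\infty(\Omega)$ in the weighted norm. The paper's proof stops at the norm estimates and never addresses this passage, so you have identified a real gap in the published argument rather than created a new one; but your own mollification sketch does not close it either, since, as you say yourself, convergence of $\nabla g_\delta\to\nabla g$ in $L^p(w\,dx)$ is not automatic for weights satisfying only $w,\,w^{-1/(p-1)}\in L^1_{loc}$, and you would need to invoke a density result for compactly supported weighted Sobolev functions from \cite{Ak} (or restrict to weights for which mollification is continuous on $L^p(w\,dx)$) to finish. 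As it stands your write-up is a more honest version of the paper's proof: identical in its quantitative content, more rigorous about the weak derivative, and explicit about the one step that neither you nor the paper fully justifies.
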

\begin{proof}
Since $h\in C^1(0,\infty)$ and $v$ is continuous on any compact subset $K$ of $\Omega$ we have the functions
$$
F_{1}:K\rightarrow\mathbb{R}\;\mbox{defined \;by}\; F_{1}(x)=h'(v)(x)
$$ and
$$
F_{2}:K\rightarrow\mathbb{R}\;\mbox{defined \;by}\; F_{2}(x)=h(v)(x)
$$ are continuous.
Therefore on any compact subset $K:=\mbox{supp}\;\phi$ we have 
\begin{equation}\label{F_1}
|F_{1}(x)|\leq c
\end{equation} where $c$ is a constant independent of $x$ and 
\begin{equation}\label{F_2}
|F_{2}(x)|\geq\delta>0
\end{equation} where $\delta$ is a constant independent of $x$. Now using (\ref{F_2}), we obtain 
$$
\int_{\Omega}\Big|\frac{\phi^p}{h(v)}\Big|^p\;dx\leq \frac{||\phi||_{\infty}^{p^2}}{\delta^p}|\Omega|<+\infty.
$$
From (\ref{F_1}) and (\ref{F_2}), we get
\begin{align*}
\int_{\Omega} w(x)|\nabla(\frac{\phi^p}{h(v)})|^p\;dx
&=\int_{K} w(x)
\Big|\frac{p{\phi}^{p-1} \nabla\phi}{h(v)}-\frac{h'(v){\phi}^p \nabla v}{(h(v))^2}\Big|^p\;dx\\
&\leq 2^p \int_{K} w(x)(p^p\frac{|\phi|^{p(p-1)}|\nabla\phi|^p}{h(v)^p}\\
&+(\frac{h'(v)}{(h(v))^2})^p|\phi|^{p^2}|\nabla v|^p)\;dx\\
&\leq 2^p \int_{K} w(x)(p^p\frac{||\phi||_{\infty}^{p(p-1)}}{\delta^p}|\nabla\phi|^p
+\frac{c^{p}||\phi||_{\infty}^{p^2}}{\delta^{2p}}|\nabla v|^p)\;dx\\
&\leq C(||\phi||_{1,p,w}^p+||v||_{1,p,w}^p)\;dx<+\infty.
\end{align*}
Hence the lemma follows.
\end{proof}
\begin{Lem}\label{newlemma}
Let $w\in \mathcal{A}_s$ with $p_s>N$. Suppose $u\in X^+$ be such that supp $u\subset K$ for some compact subset $K$ of $\Omega$ and $v\in W^{1,p}(\Omega,w)$ positive in $\Omega$. Then $\frac{u^p}{h(v)}\in X^+$.
\end{Lem}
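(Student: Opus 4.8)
The plan is to deduce the statement from Lemma \ref{uselemma} by an approximation argument, the decisive new input being $p_s>N$. Since $w\in\mathcal{A}_s\subset\mathcal{A}_p$ and $p_s>N$, the relevant case of the embedding Lemma \ref{embedding theorem} gives the continuous inclusions $W^{1,p}(\Omega,w)\hookrightarrow C^{0,\alpha}(\overline{\Omega})$ and $X\hookrightarrow C^{0,\alpha}(\overline{\Omega})$. Passing to continuous representatives, both $u$ and $v$ are then continuous and bounded on $\overline{\Omega}$, and convergence in $||\cdot||_{1,p,w}$ automatically yields uniform convergence. This is precisely what lets us upgrade the smooth hypothesis of Lemma \ref{uselemma} to the present Sobolev setting, where $u$ is merely an element of $X$.

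First I would fix the pointwise data on $K=\mbox{supp}\,u$. As $v$ is continuous and strictly positive on $\Omega$ and $K\subset\Omega$ is compact, $v(K)\subset[\delta,M]\subset(0,\infty)$; since $h\in C^1(0,\infty)$, on a fixed compact neighbourhood $K_0\subset\Omega$ of $K$ one obtains constants with $h(v)\geq\delta'>0$, $1/h(v)\leq C$ and $|h'(v)|\leq c$, exactly as in Lemma \ref{uselemma}. Next, mollifying the nonnegative function $u$ against a cutoff supported in $K_0$ — which is legitimate for $\mathcal{A}_p$ weights, see \cite{Ak} — produces $\phi_n\in[C_c^\infty(\Omega)]^+$ with $\mbox{supp}\,\phi_n\subset K_0$ and $\phi_n\to u$ in $||\cdot||_{1,p,w}$, hence also uniformly. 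Because $v\in C(\Omega)\cap W^{1,p}(\Omega,w)$ is positive, Lemma \ref{uselemma} applies to each $\phi_n$ and gives $\frac{\phi_n^p}{h(v)}\in X^+$.

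It then suffices to prove $\frac{\phi_n^p}{h(v)}\to\frac{u^p}{h(v)}$ in $||\cdot||_{1,p,w}$, for then closedness of $X$ inside $W^{1,p}(\Omega,w)$ forces the limit into $X$, while $u\geq 0$ and $h(v)>0$ give nonnegativity. Convergence of the functions in $L^p$ is immediate from uniform convergence and confinement of supports to $K_0$. For the gradients I would split
\[
\nabla\Big(\frac{\phi_n^p}{h(v)}\Big)=\frac{p\phi_n^{p-1}\nabla\phi_n}{h(v)}-\frac{h'(v)\phi_n^p\nabla v}{(h(v))^2}
\]
termwise against the same expression with $u$. In the first term, the decomposition $\phi_n^{p-1}\nabla\phi_n-u^{p-1}\nabla u=\phi_n^{p-1}(\nabla\phi_n-\nabla u)+(\phi_n^{p-1}-u^{p-1})\nabla u$, combined with the uniform bound on $\phi_n^{p-1}$, the convergence $\nabla\phi_n\to\nabla u$ in the weighted space $L^p(\Omega,w)$, the uniform convergence $\phi_n^{p-1}\to u^{p-1}$, and dominated convergence (using $\nabla u\in L^p(\Omega,w)$), drives it to zero, the lower bound on $h(v)$ absorbing the denominator. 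The second term tends to zero because $\phi_n^p\to u^p$ uniformly while it is multiplied by the fixed function $h'(v)\nabla v/(h(v))^2$, whose membership in $L^p(\Omega,w)$ follows from $\nabla v\in L^p(\Omega,w)$ together with the bounds on $K_0$.

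I expect the two technical points beneath the approximation to be the main obstacle: constructing a nonnegative smooth sequence with supports in a fixed compact set converging in $||\cdot||_{1,p,w}$, which rests on the admissibility of mollification for $\mathcal{A}_p$ weights, and verifying that the weighted-$L^p$ limit of the gradients $\nabla(\phi_n^p/h(v))$ really is the weak gradient of $\frac{u^p}{h(v)}$. The latter amounts to closedness of the weak-gradient operator in the weighted space, which is guaranteed by $w^{-1/(p-1)}\in L^1_{loc}(\Omega)$ from the definition of $\mathcal{A}_p$: this ensures that weighted-$L^p$ functions are locally integrable, so that the distributional identities defining the weak derivative pass to the limit.
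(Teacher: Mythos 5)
Your proposal is correct in substance but takes a genuinely different route from the paper. The paper's own proof is a direct computation: since $w\in\mathcal{A}_s$ with $p_s>N$, Lemma \ref{embedding theorem} lets it take $u$ and $v$ continuous up to the boundary, so on $K=\mbox{supp}\,u$ one has $\|u\|_\infty\le M$, $h(v)\ge\delta>0$ and $h'(v)\le T$, and it then simply repeats the two integral estimates of Lemma \ref{uselemma} with $u$ in place of $\phi$, concluding that $\|u^p/h(v)\|_{1,p,w}<+\infty$. Your approximation scheme --- smooth nonnegative approximants with supports in a fixed compact neighbourhood $K_0$, Lemma \ref{uselemma} applied to each, then a limit passage in $\|\cdot\|_{1,p,w}$ --- buys something the paper's estimate does not explicitly deliver: genuine membership in $X=W_0^{1,p}(\Omega,w)$, which is defined as a closure of $C_c^\infty(\Omega)$, rather than mere finiteness of the weighted norm of a compactly supported function. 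The price is the step you yourself flag as delicate: for the paper's very weak class $\mathcal{A}_p$ (only $w,\,w^{-1/(p-1)}\in L^1_{loc}$), convergence of mollifications of $u$ in the weighted gradient norm is not automatic (this is the classical $H\neq W$ issue for degenerate weights), so resting the construction of $\phi_n$ on mollification is the fragile link. You do not need it: since $u\in X$ by hypothesis, a sequence $\psi_n\in C_c^\infty(\Omega)$ with $\psi_n\to u$ in $\|\cdot\|_{1,p,w}$ exists by definition of $X$; the embedding for $p_s>N$ upgrades this to uniform convergence, and replacing $\psi_n$ by $\eta\,(\psi_n+\epsilon_n)$ with $\eta$ a fixed cutoff equal to $1$ near $K$, supported in $K_0$, and $\epsilon_n=\|\min(\psi_n,0)\|_\infty\to 0$, yields the nonnegative, uniformly supported approximants your argument requires (the extra terms vanish in norm because $w\in L^1(K_0)$). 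With that substitution your termwise limit analysis of $\nabla(\phi_n^p/h(v))$ is sound, and your proof is arguably more complete on the $W_0$ point than the paper's.
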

\begin{proof}
Since $u,v\in W^{1,p}(\Omega,w)$ with $w\in A_s$ with $p_s>N$ by Lemma $\ref{embedding theorem}$, we may assume both $u$ and $v$ are continuous upto the boundary. 
Since $h\in C^1(0,\infty)$ both $h(v)$ and $h'(v)$ are continuous over any compact subset $K$ of $\Omega$. Suppose supp $u\subset K$. 
Let us assume that 
\begin{equation}\label{bound}
||u||_\infty\leq M
\end{equation}
Let $\delta$,$T$ be constants such that 
\begin{equation}\label{useful}
h(v)\geq\delta>0\;\;
\text{and}\;\;
h'(v)\leq T\;\;\text{on}\;\;K
\end{equation}
Now using (\ref{bound}) and (\ref{useful}), we obtain
\begin{align*}
\int_{\Omega}|\frac{u^p}{h(v)}|^p dx
\leq\int_{K}\frac{||u||_{\infty}^{p^2}}{|h(v)|^p}\;dx\leq \frac{M^{p^2}}{\delta^{p}}|K|<+\infty.
\end{align*}
and
\begin{align*}
\int_{\Omega}w(x)|\nabla(\frac{u^p}{h(v)})|^p dx
&=\int_{K}w(x)\Big|p\frac{u^{p-1}\nabla u}{h(v)}-\frac{h'(v)}{(h(v))^2}u^p \nabla v\Big|^p\;dx\\
&\leq 2^p\int_{K}w(x)\{p^p\frac{|u|^{p(p-1)}}{(h(v))^p}|\nabla u|^p+\frac{(h'(v))^p}{(h(v))^{2p}}|u|^{p^2}|\nabla v|^p\}\;dx\\
&\leq 2^p\int_{K}w(x)\{p^p\frac{M^{p(p-1)}}{\delta^p}|\nabla u|^p+\frac{T^p}{\delta^{2p}}M^{p^2}|\nabla v|^p\}\;dx\\
&\leq C \{||u||^p_{1,p,w}+||v||^p_{1,p,w}\}\\
&< +\infty.
\end{align*}
Hence the lemma follows.
\end{proof}

For Lemma \ref{uselemma0} and Lemma \ref{uselemma1}, we assume $h\in\mathbb{Q}$ and is monotone increasing satisfying the following property:
\begin{equation}\label{subcon}
q_{\epsilon}:=\big|\frac{h'(x+\epsilon)}{(h(x+\epsilon))^2}\big|_{\infty}\leq M(\epsilon)\;\mbox{for any}\;\epsilon>0
\end{equation}
where $M(\epsilon)$ is independent of $x$.
Clearly $e^{(p-1)x}$ and $(x+c)^{p-1}$ is in the above class for any $c>0$ and $p>1$.

\begin{Lem}\label{uselemma0}
Let $w\in \mathcal{A}_p$, $\phi\in [C_{c}^{\infty}(\Omega)]^+$ and $v\in [W^{1,p}(\Omega,w)]^+$. Then we have $\frac{\phi^p}{h(v+\epsilon)}\in X^{+}$ for every $\epsilon>0$.
\end{Lem}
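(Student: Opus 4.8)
The plan is to follow the same blueprint as the proofs of Lemma \ref{uselemma} and Lemma \ref{newlemma}: I estimate the two pieces of the norm $||\cdot||_{1,p,w}$ of the candidate function $u_\epsilon:=\frac{\phi^p}{h(v+\epsilon)}$ directly and show both are finite. The decisive difference is that $v$ is now only assumed to lie in $[W^{1,p}(\Omega,w)]^+$, so I can no longer invoke continuity of $v$ to produce the pointwise bounds on $h(v)$ and $h'(v)$ that were available before. Instead, the shift by $\epsilon$ together with the two standing hypotheses on $h$ — monotonicity and the bound (\ref{subcon}) — will supply exactly the two constants that played the roles of (\ref{F_1}) and (\ref{F_2}).

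First I would observe that, since $h$ is monotone increasing and $v\geq 0$ a.e., one has $h(v+\epsilon)\geq h(\epsilon)=:\delta>0$ a.e.; this is the replacement for the lower bound (\ref{F_2}) and, crucially, it needs no continuity of $v$. Next, hypothesis (\ref{subcon}) yields the uniform pointwise bound $\frac{h'(v+\epsilon)}{(h(v+\epsilon))^2}\leq M(\epsilon)$ a.e., which takes over the role of (\ref{F_1}). Writing $K:=\operatorname{supp}\phi$, the $L^p$ estimate is then immediate from $h(v+\epsilon)\geq\delta$:
$$\int_\Omega\Big|\frac{\phi^p}{h(v+\epsilon)}\Big|^p\,dx\leq\frac{||\phi||_\infty^{p^2}}{\delta^p}|K|<+\infty.$$

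For the weighted gradient, I would use the formula $\nabla u_\epsilon=\frac{p\phi^{p-1}\nabla\phi}{h(v+\epsilon)}-\frac{h'(v+\epsilon)\phi^p\nabla v}{(h(v+\epsilon))^2}$, apply $|a+b|^p\leq 2^p(|a|^p+|b|^p)$, and bound the first summand using $h(v+\epsilon)\geq\delta$ and the second using (\ref{subcon}). This leads to
$$\int_\Omega w|\nabla u_\epsilon|^p\,dx\leq C\big(||\phi||_{1,p,w}^p+||v||_{1,p,w}^p\big)<+\infty,$$
where the constant $C$ collects $\delta$, $M(\epsilon)$, and powers of $||\phi||_\infty$. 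Hence $u_\epsilon\in W^{1,p}(\Omega,w)$, and since it is nonnegative with compact support $K\subset\Omega$, it lies in $X^+$.

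The main obstacle I anticipate is justifying the weak gradient formula for $u_\epsilon$, since $v$ is merely Sobolev rather than continuous. This is precisely where the shift by $\epsilon$ and hypothesis (\ref{subcon}) earn their keep: they guarantee that $\frac{h'(v+\epsilon)}{(h(v+\epsilon))^2}$ is an a.e.-bounded multiplier, so that $\frac{1}{h(v+\epsilon)}$ lies in $W^{1,p}_{loc}(\Omega,w)$ with weak gradient $-\frac{h'(v+\epsilon)\nabla v}{(h(v+\epsilon))^2}$; multiplying by the smooth compactly supported factor $\phi^p$ then keeps everything inside $W^{1,p}(\Omega,w)$ with the stated gradient, after which the two estimates above close the argument.
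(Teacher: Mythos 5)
Your proposal is correct and follows essentially the same route as the paper: monotonicity of $h$ gives the lower bound $h(v+\epsilon)\geq h(\epsilon)$ for the $L^p$ piece, and hypothesis (\ref{subcon}) controls the multiplier $q_\epsilon$ in the gradient term, yielding the bound $C(||\phi||_{1,p,w}^p+||v||_{1,p,w}^p)$. Your extra remark on justifying the weak chain rule for $\frac{1}{h(v+\epsilon)}$ is a point the paper passes over silently, and is a welcome addition rather than a deviation.
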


\begin{proof}
Since $h$ is increasing, we have
$$
\int_{\Omega}\Big|\frac{\phi^p}{h(v+\epsilon)}\Big|^p<(\frac{||\phi||_{\infty}^{p}}{h(\epsilon)})^p|\Omega|<+\infty.
$$
Moreover,
\begin{multline*}
\int_{\Omega} w(x)|\nabla(\frac{\phi^p}{h(v+\epsilon)})|^p\;dx
=\int_{\Omega} w(x)\Big|\frac{p{\phi}^{p-1} \nabla\phi}{h(v+\epsilon)}-q_{\epsilon}(v){\phi}^p {\nabla v}\Big|^p\;dx\\
\leq 2^p\int_{\Omega} w(x)(\frac{p^{p}||\phi||_{\infty}^{p(p-1)}}{(h(\epsilon))^p}|\nabla\phi|^p+M^p({\epsilon})||\phi||_{\infty}^{p^2}|\nabla v|^p)\;dx\\
\leq C(||\phi||^{p}_{1,p,w}+||v||^{p}_{1,p,w})<+\infty.
\end{multline*}
since $\phi\in C_{c}^{\infty}(\Omega)$, the lemma follows.
\end{proof}

\begin{Lem}\label{uselemma1}
Let $w\in \mathcal{A}_s$ with $p_s>N$. Then we have $\frac{u^p}{h(v+\epsilon)}\in X^{+}$ provided $u\in X^+$ and $v\in [W^{1,p}(\Omega,w)]^+$.
\end{Lem}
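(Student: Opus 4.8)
The plan is to reduce this to Lemma \ref{uselemma0} by a density argument, using that $X=W_0^{1,p}(\Omega,w)$ is by definition the $\|\cdot\|_{1,p,w}$-closure of $C_c^\infty(\Omega)$ and is a (complete) Banach space. Since $u\in X^+$, I would first fix a sequence $\phi_n\in[C_c^\infty(\Omega)]^+$ with $\phi_n\to u$ in $\|\cdot\|_{1,p,w}$; nonnegative approximants are available because $u\ge 0$, by a standard truncation/mollification of a generic approximating sequence. Because $w\in\mathcal{A}_s$ with $p_s>N$, Lemma \ref{embedding theorem} furnishes the continuous embedding $W^{1,p}(\Omega,w)\hookrightarrow C^{0,\alpha}(\overline{\Omega})$, so $u$ is continuous and bounded, say $\|u\|_\infty\le M$, and the convergence $\phi_n\to u$ upgrades to uniform convergence on $\overline{\Omega}$ (hence also $\phi_n^p\to u^p$ and $\phi_n^{p-1}\to u^{p-1}$ uniformly). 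By Lemma \ref{uselemma0} each $\frac{\phi_n^p}{h(v+\epsilon)}\in X^+$, so it suffices to prove $\frac{\phi_n^p}{h(v+\epsilon)}\to\frac{u^p}{h(v+\epsilon)}$ in $\|\cdot\|_{1,p,w}$: completeness of $X$ then places the limit in $X$, and nonnegativity is preserved.

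For the $L^p$-part I would use $h(v+\epsilon)\ge h(\epsilon)>0$ (monotonicity of $h$ together with $v\ge 0$) and the uniform convergence $\phi_n^p\to u^p$, so that $\frac{\phi_n^p-u^p}{h(v+\epsilon)}\to 0$ in $L^p(\Omega)$. The substantive term is the weighted gradient. Writing
\[
\nabla\Big(\frac{\phi_n^p}{h(v+\epsilon)}\Big)-\nabla\Big(\frac{u^p}{h(v+\epsilon)}\Big)=\frac{p\,(\phi_n^{p-1}\nabla\phi_n-u^{p-1}\nabla u)}{h(v+\epsilon)}-\frac{h'(v+\epsilon)\,(\phi_n^p-u^p)}{(h(v+\epsilon))^2}\,\nabla v,
\]
I would estimate each piece in the weighted $L^p$-norm. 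The second piece is controlled by $q_\epsilon\le M(\epsilon)$ from (\ref{subcon}) and $w^{1/p}|\nabla v|\in L^p(\Omega)$, with the scalar factor $\|\phi_n^p-u^p\|_\infty\to 0$ pulled out. For the first piece I would use $\frac{1}{h(v+\epsilon)}\le\frac{1}{h(\epsilon)}$ and the splitting
\[
\phi_n^{p-1}\nabla\phi_n-u^{p-1}\nabla u=\phi_n^{p-1}(\nabla\phi_n-\nabla u)+(\phi_n^{p-1}-u^{p-1})\nabla u,
\]
where the uniform bound on $\phi_n^{p-1}$ and $w^{1/p}(\nabla\phi_n-\nabla u)\to 0$ in $L^p$ kill the first summand, while $\|\phi_n^{p-1}-u^{p-1}\|_\infty\to 0$ and $w^{1/p}|\nabla u|\in L^p(\Omega)$ kill the second.

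The main obstacle is the first gradient summand, and more precisely the need to run two modes of convergence at once: convergence of $\phi_n\to u$ in the \emph{weighted} $W^{1,p}$-norm (to handle the factor $\nabla\phi_n-\nabla u$) together with \emph{uniform} convergence (to handle the factors $\phi_n^{p-1}$ and $\phi_n^p$). It is exactly the restriction $p_s>N$ that makes the latter available through Lemma \ref{embedding theorem}; without it the uniform control of the nonlinear factors would fail, and a direct norm estimate would at best show membership in $W^{1,p}(\Omega,w)$ rather than in the closure $X$. The only other point requiring care is the existence of the nonnegative smooth approximants $\phi_n$, which I would dispatch as noted above using $u\ge 0$. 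Everything else reduces to routine dominated-convergence / H\"older bookkeeping, after which completeness of $X$ closes the argument.
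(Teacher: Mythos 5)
Your proof is correct, but it takes a genuinely different route from the paper's. The paper's own proof is a direct norm estimate: it invokes Lemma \ref{embedding theorem} to get $u,v\in C(\overline{\Omega})$, bounds $u$ by $\|u\|_\infty$ and $h(v+\epsilon)$ from below, and shows $\big\|\frac{u^p}{h(v+\epsilon)}\big\|_{1,p,w}<+\infty$ by the same $2^p$-splitting used in Lemmas \ref{uselemma} and \ref{newlemma}; it then declares the lemma proved. Strictly speaking that argument only places the function in $W^{1,p}(\Omega,w)$, whereas the assertion is membership in $X=W_0^{1,p}(\Omega,w)$, the closure of $C_c^\infty(\Omega)$; the paper leaves that last step implicit. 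Your density argument --- approximate $u$ by $\phi_n\in[C_c^\infty(\Omega)]^+$, apply Lemma \ref{uselemma0} to each $\frac{\phi_n^p}{h(v+\epsilon)}$, upgrade $\phi_n\to u$ to uniform convergence via the $C^{0,\alpha}$ embedding, and pass to the limit in $\|\cdot\|_{1,p,w}$ using completeness --- actually closes that gap, and your two-way splitting of the gradient term is sound. The one point you treat too lightly is the existence of \emph{nonnegative} smooth approximants: $\max(\psi_n,0)$ is not smooth, and the continuity of truncation on weighted Sobolev spaces (needed to re-mollify and stay close to $u$ in $\|\cdot\|_{1,p,w}$) is not automatic for the broad $\mathcal{A}_p$ class used here; this deserves either a precise citation or a short argument exploiting the $C^{0,\alpha}$ embedding. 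Net effect: the paper's proof is shorter but proves slightly less than stated; yours is longer, needs one extra technical ingredient, but delivers membership in the closure $X$ rather than merely finiteness of the norm.
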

\begin{proof} 
$w\in \mathcal{A}_s$, $p_s>N$ by Lemma $\ref{embedding theorem}$, both $u$ and $v$ are continuous upto the boundary.
Therefore,
$$
\int_{\Omega}|\frac{u^p}{h(v+\epsilon)}|^p dx\leq \frac{||u||_{\infty}^{p^2}}{h(\epsilon)^p}|\Omega|<+\infty.
$$
and
\begin{multline*}
\int_{\Omega} w(x)\Big|\nabla(\frac{u^p}{h(v+\epsilon)})\Big|^p\;dx
=\int_{\Omega} w(x)\Big|\frac{pu^{p-1}\nabla u}{h(v+\epsilon)}-q_{\epsilon}(v)u^p\nabla v\Big|^p\;dx\\
\leq 2^p \int_{\Omega}w(x)\{\frac{p^p||u||_{\infty}^{p(p-1)}|\nabla u|^{p}}{(h(\epsilon))^p}
+M^p({\epsilon})||u||_{\infty}^{p^2}|\nabla v|^p\}\;dx\\
\leq C(||u||_{1,p,w}^{p}+||v||^{p}_{1,p,w})
<+\infty.
\end{multline*}
Hence the lemma follows.
\end{proof}

\section{Main Results}
We start this section by stating our main results.\\
For $u>0$ in $\Omega$, consider the equation
\begin{equation}\label{np1}
-\Delta_{p,w}u-g_1(x)f(u)=g_2(x);\;\;w\in \mathcal{A}_s;\;\;p_s>N
\end{equation}
where $0\leq{g_1,g_2}\in{L^{1}(\Omega)}$ and $f\in\mathbb{M}$ satisfying (\ref{subcon}).
We say $u\in{W^{1,p}(\Omega,w)}$ is a positive weak super-solution of equation (\ref{np1}), if $u>0$ in $\Omega$ and 
\begin{equation}\label{np2}
\begin{gathered}
\int_{\Omega}w(x)|\nabla u|^{p-2}\nabla u.\nabla \varphi dx-\int_{\Omega}g_1(x)f(u)\varphi dx \geq {\int_{\Omega}g_2(x)\phi dx}
\end{gathered}
\end{equation}
for all $\varphi\in X^{+}$.
\begin{The}\label{np}(Non existence of positive super-solutions)
Given $w\in \mathcal{A}_s$ with $p_{s}> N$ if there exist $u\in X^+$ such that $$J(u)=\int_{\Omega}w(x)|\nabla u|^p dx-\int_{\Omega}g_1(x) u^p dx<0,$$ then (\ref{np1}) has no positive weak super-solution in $W^{1,p}(\Omega,w)$.
\end{The}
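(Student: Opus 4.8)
The plan is to argue by contradiction using the Picone identity (Lemma \ref{GPI}), with the hypothesis function in the role of the nonnegative argument and the super-solution in the role of the positive argument. To avoid the notational clash in the statement, let me write $u\in X^+$ for the function satisfying $J(u)<0$, and suppose for contradiction that $v\in W^{1,p}(\Omega,w)$ is a positive weak super-solution of (\ref{np1}). Since $w\in\mathcal{A}_s$ with $p_s>N$, the embedding Lemma \ref{embedding theorem} ensures that $u$ and $v$ are continuous on $\overline{\Omega}$; in particular $u$ is bounded, so that $\int_\Omega g_1 u^p\,dx\le\|u\|_\infty^p\|g_1\|_{L^1}<\infty$ and the quantity $J(u)$ is a well-defined finite negative number.

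First I would regularize by a parameter $\epsilon>0$ and choose $\varphi_\epsilon=\frac{u^p}{f(v+\epsilon)}$ as a test function. By Lemma \ref{uselemma1}, applied with $h=f$ (which lies in the admissible class because $f\in\mathbb{M}$ satisfies (\ref{subcon})), we have $\varphi_\epsilon\in X^+$, so it is legitimate in (\ref{np2}). Substituting $\varphi_\epsilon$ into the super-solution inequality and discarding the nonnegative term $\int_\Omega g_2\varphi_\epsilon\,dx$ (using $g_2\ge 0$) yields
\begin{equation*}
\int_\Omega g_1(x)\,f(v)\,\varphi_\epsilon\,dx\le\int_\Omega w(x)|\nabla v|^{p-2}\nabla v\cdot\nabla\varphi_\epsilon\,dx.
\end{equation*}
The second key step is to apply the Picone identity to the pair $(u,v+\epsilon)$. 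Since $\nabla(v+\epsilon)=\nabla v$, integrating the pointwise inequality $R(u,v+\epsilon)\ge 0$ over $\Omega$ gives
\begin{equation*}
\int_\Omega w(x)|\nabla v|^{p-2}\nabla v\cdot\nabla\left(\frac{u^p}{f(v+\epsilon)}\right)dx\le\int_\Omega w(x)|\nabla u|^p\,dx.
\end{equation*}
Chaining the two displayed inequalities, and writing $f(v)\varphi_\epsilon=\frac{f(v)}{f(v+\epsilon)}u^p$, produces
\begin{equation*}
\int_\Omega g_1(x)\,\frac{f(v)}{f(v+\epsilon)}\,u^p\,dx\le\int_\Omega w(x)|\nabla u|^p\,dx.
\end{equation*}

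Finally I would let $\epsilon\downarrow 0$. Because $f$ is continuous and increasing, $f(v+\epsilon)\downarrow f(v)$ pointwise, so the integrand $g_1\frac{f(v)}{f(v+\epsilon)}u^p$ increases monotonically to $g_1 u^p$; by the monotone convergence theorem the left-hand side converges to $\int_\Omega g_1 u^p\,dx$. This gives $\int_\Omega g_1 u^p\,dx\le\int_\Omega w|\nabla u|^p\,dx$, i.e. $J(u)\ge 0$, contradicting the hypothesis $J(u)<0$ and proving the theorem. The main obstacle in making this rigorous is the legitimacy of the regularized test function $\varphi_\epsilon$ together with the application of the Picone identity — stated for differentiable functions — to $u,v\in W^{1,p}(\Omega,w)$; this is exactly where the continuity supplied by the embedding into $C^{0,\alpha}(\overline{\Omega})$ (guaranteed by $p_s>N$) and the admissibility provided by Lemma \ref{uselemma1} do the work, and where the structural assumption $f\in\mathbb{M}$ enters to guarantee $R(u,v+\epsilon)\ge 0$.
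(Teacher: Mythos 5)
Your proposal is correct and follows essentially the same route as the paper: test (\ref{np2}) with $\varphi_\epsilon=\frac{u^p}{f(v+\epsilon)}$ (admissible by Lemma \ref{uselemma1}), invoke the Picone identity for the pair $(u,v+\epsilon)$, and pass to the limit $\epsilon\downarrow 0$ to conclude $J(u)\ge 0$. The only cosmetic difference is that you use monotone convergence (justified since $f\in\mathbb{M}$ is increasing) where the paper invokes Fatou's lemma, and you discard the nonnegative $R$ and $g_2$ terms slightly earlier; both variants are valid.
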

\begin{Rem}\label{NP}
Assume that $f\in\mathbb{M}$ satisfying (\ref{subcon}) and $w\in \mathcal{A}_s$ with $p_s>N$ where $g_1,g_2\in L^1(\Omega)$ be non-negative. If there exist a function $u\in X^+$ with supp $u\subset K$ for some compact subset $K$ of $\Omega$ such that $J(u)<0$,
then (\ref{np1}) has no positive weak super-solution in $W^{1,p}(\Omega,w)$.   
\end{Rem}

\begin{The}\label{har}(Hardy-type Inequality)
Let $w\in \mathcal{A}_p $ and assume that there exists $v\in C(\overline{\Omega})\cap W^{1,p}(\Omega,w)$ or $v\in C^1(\overline{\Omega})$ satisfying
\begin{equation}\label{evp}
-\Delta_{p,w} v\geq  g(x)f(v);\quad v>0\quad\text{in}\quad\Omega;\;f\in\mathbb{M}
\end{equation}
for some non-negative function $g\in L^1(\Omega)$. Then for any $u\in [C^\infty_c(\Omega)]^+$ it holds that
\begin{equation}\label{Hardy}
\int_\Omega w(x)|\nabla u|^p dx\geq\int_\Omega g(x)u^p dx.
\end{equation}
\end{The}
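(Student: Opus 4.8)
The plan is to pair the generalized Picone identity of Lemma~\ref{GPI} with the weak form of the differential inequality \eqref{evp}, the coupling being realized by choosing $\varphi=u^p/f(v)$ as a test function. The two preliminary lemmas were set up precisely so that this $\varphi$ is admissible, and once that is in place the inequality \eqref{Hardy} falls out from a short chain of two inequalities.

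Concretely, I would first fix $u\in[C_c^\infty(\Omega)]^+$ and put $K=\operatorname{supp} u$, a compact subset of $\Omega$. Since $v$ is continuous and strictly positive, it is bounded below by some $\delta>0$ on $K$, and as $f\in\mathbb{M}\subset\mathbb{Q}$ is a positive $C^1$ function, $f(v)$ and $f'(v)$ are bounded on $K$; by Lemma~\ref{uselemma} applied with $h=f$ this gives $\varphi=u^p/f(v)\in X^+$. Because $f\in\mathbb{M}$, Lemma~\ref{GPI} gives $L(u,v)=R(u,v)\ge 0$ pointwise a.e., and integrating $R(u,v)$ over $\Omega$ yields
\begin{equation*}
\int_\Omega w(x)|\nabla u|^p\,dx \ \ge\ \int_\Omega w(x)|\nabla v|^{p-2}\nabla v\cdot\nabla\Big(\tfrac{u^p}{f(v)}\Big)\,dx.
\end{equation*}
The right-hand side is exactly the left side of the weak formulation of \eqref{evp} tested against $\varphi\in X^+$, so
\begin{equation*}
\int_\Omega w(x)|\nabla v|^{p-2}\nabla v\cdot\nabla\Big(\tfrac{u^p}{f(v)}\Big)\,dx \ \ge\ \int_\Omega g(x)f(v)\,\frac{u^p}{f(v)}\,dx \ =\ \int_\Omega g(x)u^p\,dx,
\end{equation*}
and combining the two displays gives \eqref{Hardy}.

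The main obstacle is not the algebra but the analytic justification underpinning it. First, one must verify that $\varphi=u^p/f(v)$ is a genuine element of $X^+$ and that the Picone identity, stated in Lemma~\ref{GPI} for differentiable functions, survives for the merely weakly differentiable $v$; both points are handled by localizing to $K$, where $v\ge\delta>0$, where $f(v),f'(v)$ are bounded, where $w\in L^1$, and where the relation $R(u,v)=L(u,v)\ge 0$ is read in terms of weak gradients — Lemma~\ref{uselemma} then certifies $\varphi\in X^+$. Second, the statement permits two regularity classes for $v$ (namely $C(\overline\Omega)\cap W^{1,p}(\Omega,w)$ or $C^1(\overline\Omega)$), so I would check that in both cases the pairing $\int_\Omega w|\nabla v|^{p-2}\nabla v\cdot\nabla\varphi$ is finite; since $\varphi$ vanishes outside $K$ this collapses to local integrability of $w|\nabla v|^{p}$ on $K$, which holds because $w\in\mathcal{A}_p\subset L^1_{loc}(\Omega)$ and $|\nabla v|$ is controlled on $K$ in either class.
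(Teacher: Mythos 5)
Your proposal is correct and follows essentially the same route as the paper: the same test function $u^p/f(v)$, admissible by Lemma~\ref{uselemma}, is inserted into the weak form of (\ref{evp}), and the nonnegativity of $R(u,v)$ from the Picone identity (Lemma~\ref{GPI}) supplies the other half of the two-inequality chain. The extra regularity checks you flag (positivity of $v$ on $\operatorname{supp} u$, boundedness of $f(v)$, $f'(v)$ there, local integrability of $w$) are exactly the content the paper delegates to Lemma~\ref{uselemma}, so nothing is missing.
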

\begin{Rem}\label{Hardy remark}
Assume $w\in \mathcal{A}_s$ with $p_s>N$ and $g\in L^1(\Omega)$ be non-negative such that there exists $v\in W^{1,p}(\Omega,w)>0$ in $\Omega$ satisfying (\ref{evp}), then for any $u\in X^+$ with support $u\subset K$ for some compact subset $K$ of $\Omega$, the inequality (\ref{Hardy}) holds.
\end{Rem}

\begin{The}\label{sc}(Comparison Theorem)
Let $w_1\in \mathcal{A}_p$ be a weight function and $w_{2}\in A_s$ with $p_s>N$ such that $w_2(x)\leq w_1(x)$ for a.e. $x\in\Omega$. Assume $u\in [W^{1,p}(\Omega,w_1)]^{+}$ be such that 
\begin{equation}\label{sc1}
-\;\Delta_{p,w_{1}}u=f_1(x)|u|^{p-2}u;\quad u\geq 0(\not\equiv 0)\;\;\text{in}\;\;\Omega;\;\; u=0\;\;\text{on}\;\;\partial\Omega
\end{equation}
Then any non-trivial solution $v\in W^{1,p}(\Omega,w_2)$ of
\begin{equation}\label{sc2} 
-\Delta_{p,w_{2}}v=f_2(x)|v|^{p-2}v\;\;\text{in}\;\;\Omega;\;\;v=0\;\text{on}\;\;\partial\Omega
\end{equation}
where $0\leq f_1(x)< f_2(x)$ a.e. $x\in{\Omega}$ and $f_1,f_2\in L^{1}(\Omega)$ must change sign.
\end{The}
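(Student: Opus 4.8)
The plan is to argue by contradiction using the Picone identity of Lemma \ref{GPI} together with the admissible test functions built in Section 2, with $X$ here understood as $W_0^{1,p}(\Omega,w_2)$. Suppose the nontrivial solution $v$ of (\ref{sc2}) does not change sign. Since both $v\mapsto-\Delta_{p,w_2}v$ and the right-hand side $f_2|v|^{p-2}v$ are odd in $v$, replacing $v$ by $-v$ if necessary I may assume $v\geq 0$ in $\Omega$. Because $w_2\in\mathcal{A}_s$ with $p_s>N$, Lemma \ref{embedding theorem} makes $v$ continuous, and a Harnack-type estimate for $\mathcal{A}_p$-degenerate operators (Fabes et al.\ \cite{EFabes}) upgrades a nonnegative nontrivial solution to $v>0$ in $\Omega$. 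A preliminary observation is that $u$ is an admissible competitor for the weight $w_2$: since $w_2(x)\leq w_1(x)$ a.e., one has $\int_\Omega w_2|\nabla\varphi|^p\,dx\leq\int_\Omega w_1|\nabla\varphi|^p\,dx$, so $W_0^{1,p}(\Omega,w_1)\hookrightarrow W_0^{1,p}(\Omega,w_2)=X$; in particular $u\in X^+$ and $\int_\Omega w_2|\nabla u|^p\,dx<\infty$.

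Next I regularize. For $\epsilon>0$ I take $h(y)=y^{p-1}$, which lies in $\mathbb{M}$ and satisfies (\ref{subcon}) with $M(\epsilon)=(p-1)\epsilon^{-p}$, so that Lemma \ref{uselemma1} guarantees $\psi_\epsilon:=\frac{u^p}{(v+\epsilon)^{p-1}}\in X^+$. Applying Lemma \ref{GPI} with the weight $w_2$, the nonnegative function $u$, the positive function $v+\epsilon$, and $f(y)=y^{p-1}$, and integrating the pointwise inequality $R(u,v+\epsilon)\geq 0$ over $\Omega$ yields
\begin{equation*}
\int_\Omega w_2|\nabla u|^p\,dx\geq\int_\Omega w_2|\nabla v|^{p-2}\nabla v\cdot\nabla\psi_\epsilon\,dx.
\end{equation*}
Inserting $\psi_\epsilon$ as a test function in the weak formulation of (\ref{sc2}) rewrites the right-hand side as $\int_\Omega f_2 u^p\big(\tfrac{v}{v+\epsilon}\big)^{p-1}\,dx$. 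Letting $\epsilon\to 0$ and using monotone convergence (the integrand increases as $\epsilon\downarrow 0$ and $v>0$ in $\Omega$) I obtain
\begin{equation*}
\int_\Omega w_2|\nabla u|^p\,dx\geq\int_\Omega f_2 u^p\,dx.
\end{equation*}

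Finally I compare the two energies. Testing the weak formulation of (\ref{sc1}) with $u$ itself gives $\int_\Omega w_1|\nabla u|^p\,dx=\int_\Omega f_1 u^p\,dx$, and combining this with the previous inequality and $w_2\leq w_1$,
\begin{equation*}
\int_\Omega f_2 u^p\,dx\leq\int_\Omega w_2|\nabla u|^p\,dx\leq\int_\Omega w_1|\nabla u|^p\,dx=\int_\Omega f_1 u^p\,dx,
\end{equation*}
so that $\int_\Omega(f_2-f_1)u^p\,dx\leq 0$. Since $f_2>f_1$ a.e.\ while $u\geq 0$ is nontrivial, the integrand is nonnegative and strictly positive on a set of positive measure, forcing $\int_\Omega(f_2-f_1)u^p\,dx>0$, a contradiction. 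Hence $v$ must change sign. I expect the main obstacles to be the two analytic steps that make the formal Picone computation rigorous: verifying that $\psi_\epsilon$ genuinely belongs to $X$ (this is exactly what Lemma \ref{uselemma1} supplies, and it is why the regularization by $v+\epsilon$ rather than $v$ is indispensable near $\partial\Omega$, where $v$ vanishes) together with the passage $\epsilon\to0$, and establishing the strict positivity $v>0$ required to invoke Lemma \ref{GPI}, which rests on the degenerate Harnack inequality.
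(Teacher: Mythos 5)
Your proof is correct and follows essentially the same route as the paper: the Picone identity of Lemma \ref{GPI} applied to $u$ and $v+\epsilon$, the test function $u^p/(v+\epsilon)^{p-1}$ made admissible by Lemma \ref{uselemma1}, testing (\ref{sc1}) with $u$ itself, the comparison $w_2\le w_1$, and the passage $\epsilon\to 0$. If anything your version is slightly cleaner: by running Picone with the weight $w_2$ (the paper runs it with $w_1$ and then replaces $\int_\Omega w_1|\nabla v|^{p-2}\nabla v\cdot\nabla\psi_\epsilon\,dx$ by $\int_\Omega w_2|\nabla v|^{p-2}\nabla v\cdot\nabla\psi_\epsilon\,dx$, a step whose sign is not evident) you only ever use $w_2\le w_1$ against the nonnegative integrand $|\nabla u|^p$, and you also make explicit the reduction to $v>0$ that the paper merely asserts as a without-loss-of-generality.
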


We conclude this section with a brief discussion of the eigenvalue problem. Consider the eigenvalue problem 
\begin{equation}\label{ev}
\begin{gathered}
-\Delta_{p,w}u={\lambda}\beta(x)|u|^{p-2}u\quad\text{in}\quad\Omega;\\
u=0\quad\text{on}\quad\partial{\Omega}.
\end{gathered}
\end{equation}
where $w\in \mathcal{A}_t$ and $\beta(\geq 0)\in L^{\infty}(\Omega)$ or $\beta(\geq 0)\in L^{\frac{q}{q-p}}(\Omega)$ for some $q\in(p,p_s^{*})$, where $p_s^{*}=\frac{Np_s}{N-p_s}$ with $1\leq p_s<N$. Moreover, let
$$|\{x\in\Omega:\; \beta(x)>0\}|>0$$
Here we state a result, proof of which can be found in Chapter 1 of Dr\'abek et al \cite{Ak}, which in turn will give a new norm on the space $X$.
\begin{Lem}\label{FI}(The weighted Friedrich inequality)
For $w\in \mathcal{A}_t$ the inequality
\begin{equation}\label{FIeqn}
\int_{\Omega}|u|^p dx\leq c\int_{\Omega}w(x)|\nabla u|^p dx
\end{equation}
holds for every $u\in X$ with the constant $c>0$ independent of $u$.
\end{Lem}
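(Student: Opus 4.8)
The plan is to deduce the weighted inequality from the classical unweighted Poincar\'e--Sobolev inequality, by first trading the weight for a gain of integrability on the gradient. Throughout I would argue for $u\in C_c^\infty(\Omega)$ and then pass to all of $X$ by density, since both of the functionals $u\mapsto\int_\Omega|u|^p\,dx$ and $u\mapsto\int_\Omega w|\nabla u|^p\,dx$ are continuous with respect to the norm $\|\cdot\|_{1,p,w}$, so the inequality \eqref{FIeqn} is stable under approximation.

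The crucial first step is to control the \emph{unweighted} $L^{p_s}$ gradient norm by the weighted energy alone, with no lower-order term. Writing $|\nabla u|^{p_s}=\bigl(w|\nabla u|^{p}\bigr)^{p_s/p}\,w^{-p_s/p}$ and applying H\"older's inequality with exponents $p/p_s$ and its conjugate $p/(p-p_s)$, I would obtain
\[
\int_\Omega |\nabla u|^{p_s}\,dx \le \Bigl(\int_\Omega w|\nabla u|^p\,dx\Bigr)^{p_s/p}\Bigl(\int_\Omega w^{-\frac{p_s}{p-p_s}}\,dx\Bigr)^{\frac{p-p_s}{p}}.
\]
The point is that $p-p_s=\frac{p}{s+1}$, so the weight exponent simplifies to $\frac{p_s}{p-p_s}=s$, and the last factor is finite precisely because $w\in\mathcal{A}_s$ gives $w^{-s}\in L^1(\Omega)$. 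Hence $\|\nabla u\|_{L^{p_s}(\Omega)}\le C\bigl(\int_\Omega w|\nabla u|^p\,dx\bigr)^{1/p}$, which is the essential use of the weight class.

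Next I would exploit that $w\in\mathcal{A}_t$ forces $s>N/p$, and a short computation shows this is equivalent to $p_s^{*}>p$ in the subcritical range $p_s<N$ (the borderline $p_s=N$ and supercritical $p_s>N$ cases are only easier on the bounded domain $\Omega$, giving an embedding into $L^q$ for every finite $q$, respectively into $C^{0,\alpha}(\overline\Omega)$). Consequently the unweighted Sobolev inequality for $W_0^{1,p_s}(\Omega)$ yields the gradient-only bound $\|u\|_{L^{p_s^{*}}(\Omega)}\le C_S\|\nabla u\|_{L^{p_s}(\Omega)}$, and since $p_s^{*}>p$ and $\Omega$ is bounded, $\|u\|_{L^{p}(\Omega)}\le |\Omega|^{\frac1p-\frac1{p_s^{*}}}\|u\|_{L^{p_s^{*}}(\Omega)}$. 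Chaining these three estimates and raising to the $p$-th power produces \eqref{FIeqn}.

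The only genuinely delicate point is the first step: a direct appeal to the embedding Lemma~\ref{embedding theorem} would return the \emph{full} norm $\|u\|_{1,p,w}$ on the right-hand side, which is useless for a Poincar\'e-type inequality. The H\"older splitting above is what isolates the gradient and reveals that the exponent forced by the computation matches exactly the summability $w^{-s}\in L^1(\Omega)$ built into $\mathcal{A}_s$; verifying the arithmetic identity $\frac{p_s}{p-p_s}=s$ together with the equivalence $s>N/p\iff p_s^{*}>p$ is the heart of the argument, after which everything reduces to the standard unweighted theory on bounded domains.
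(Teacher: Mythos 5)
Your proof is correct, but note that the paper does not actually prove Lemma~\ref{FI}: it is stated as a known result with a pointer to Chapter~1 of Dr\'abek--Kufner--Nicolosi \cite{Ak}, so there is no in-paper argument to compare against. What you supply is a clean, self-contained version of the standard argument for that class of weights, and every step checks out: the H\"older splitting $|\nabla u|^{p_s}=(w|\nabla u|^p)^{p_s/p}w^{-p_s/p}$ with exponents $p/p_s$ and $p/(p-p_s)$ does isolate the gradient, the arithmetic $p-p_s=\frac{p}{s+1}$ and hence $\frac{p_s}{p-p_s}=s$ is right, so finiteness of $\int_\Omega w^{-s}\,dx$ is exactly the $\mathcal{A}_s$ hypothesis; the equivalence $s>N/p\iff p_s^*>p$ is also correct, which is precisely where the extra restriction defining $\mathcal{A}_t$ (as opposed to $\mathcal{A}_s$) enters; and the gradient-only Sobolev inequality $\|u\|_{L^{p_s^*}}\le C\|\nabla u\|_{L^{p_s}}$ is legitimate for $u\in C_c^\infty(\Omega)$, after which H\"older on the bounded domain and density in $\|\cdot\|_{1,p,w}$ finish the job. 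Your remark that a naive appeal to Lemma~\ref{embedding theorem} would only return the full norm $\|u\|_{1,p,w}$ on the right-hand side, and is therefore useless here, identifies exactly why the H\"older step cannot be skipped. The one case worth a sentence more than you give it is $p_s\ge N$ on the bounded domain: there you should say explicitly that you replace $p_s$ by some $\tilde p<N$ close enough to $N$ that $\tilde p^*>p$ and use $\|\nabla u\|_{L^{\tilde p}}\le C\|\nabla u\|_{L^{p_s}}$, rather than just asserting the case is ``easier.''
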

\begin{Def}
A real number $\lambda$ such that $(\ref{ev})$ admits a non trivial solution $u$ is called an eigenvalue of the operator $-\Delta_{p,w}$ and $u$ is called the corresponding eigenfunction. 

We denote the principal eigenvalue of (\ref{ev}) as $\lambda_{1}$ and is defined as:
$$\lambda_{1}=\inf\{\int_{\Omega}w(x)|\nabla{v}|^p\;dx:\int_{\Omega}\beta(x)|v|^p\;dx=1\}$$
\end{Def} 
Observe that due to Lemma \ref{FI}, the space $X$ can be defined via an equivalent norm 
$$
||u||_X=(\int_{\Omega}w(x)|\nabla u|^p dx)^\frac{1}{p}
$$ for every $u\in X$.
We now turn to results related to the eigenvalue problem, the existence of which is already available in \cite{Ak}. The simplicity of $\lambda_1$ is proved in a different way by Dr\'abek et al \cite{Ak} but here we provide a simpler proof using ideas from Belloni-Kawohl \cite{BeKa}.

\begin{The}\label{sbh}(Simplicity of the first eigenvalue)
The first eigenvalue of the operator $-\Delta_{p,w}$ is simple for any $w\in \mathcal{A}_t$.
\end{The}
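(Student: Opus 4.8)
The plan is to show that any two positive eigenfunctions associated with $\lambda_1$ must be scalar multiples of one another; standard arguments then reduce the general case to the case of positive eigenfunctions. The central tool will be the Picone identity (Lemma \ref{GPI}) specialized to the function $f(y)=y^{p-1}\in\mathbb{M}$, for which $R(u,v)$ collapses to the familiar expression $w(x)\bigl(|\nabla u|^p-\nabla(u^p/v^{p-1})\cdot|\nabla v|^{p-2}\nabla v\bigr)$, together with the sharper convexity idea of Belloni--Kawohl based on the change of variables $u\mapsto u^p$.

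First I would fix two eigenfunctions $u_1,u_2$ corresponding to $\lambda_1$, both of which we may take to be positive in $\Omega$ (after replacing $u_i$ by $|u_i|$ and invoking the fact that $\lambda_1$ is characterized by the infimum in the Rayleigh quotient, so any minimizer has constant sign; positivity in the interior then follows from the structure of the problem). The key step is to test the weak formulation of the equation for $u_1$ against the admissible test function $\phi=u_2^p/u_1^{p-1}$ (and symmetrically for $u_2$ against $u_1^p/u_2^{p-1}$). Lemma \ref{uselemma} (or Lemma \ref{newlemma}, whichever hypothesis on $w$ is in force) guarantees that these ratios lie in $X^+$, so the testing is legitimate. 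Substituting gives
\[
\int_\Omega w(x)|\nabla u_1|^{p-2}\nabla u_1\cdot\nabla\Bigl(\frac{u_2^p}{u_1^{p-1}}\Bigr)\,dx
=\lambda_1\int_\Omega\beta(x)\,u_2^p\,dx,
\]
and the analogous identity with the roles reversed. Adding the Picone inequality $R(u_2,u_1)\ge 0$ integrated over $\Omega$ to its mirror image, the right-hand sides combine to yield
\[
0\le\int_\Omega\bigl(L(u_2,u_1)+L(u_1,u_2)\bigr)\,dx
=\lambda_1\int_\Omega\beta(x)\bigl(u_1^p+u_2^p\bigr)\,dx
-\lambda_1\int_\Omega\beta(x)\bigl(u_2^p+u_1^p\bigr)\,dx=0.
\]
Hence $\int_\Omega L(u_2,u_1)\,dx=0$, and since the integrand is pointwise nonnegative we conclude $L(u_2,u_1)=0$ a.e. The equality case of Lemma \ref{GPI} then forces $u_2=c\,u_1+d$; using that both functions vanish on $\partial\Omega$ (in the sense that they lie in $X$) gives $d=0$, so $u_2=c\,u_1$ and simplicity follows.

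The main obstacle I expect is justifying the boundary condition and the interior positivity rigorously in the weighted, low-regularity setting: the equality characterization in Picone only yields an affine relation, and eliminating the additive constant $d$ requires knowing that both eigenfunctions attain value zero on $\partial\Omega$ in a strong enough sense, which is delicate because the weight $w\in\mathcal{A}_t$ degenerates and the usual Hopf-type boundary regularity is unavailable. The Belloni--Kawohl approach circumvents part of this by working directly with the convexity of the functional $v\mapsto\int_\Omega w|\nabla v^{1/p}|^p$ along segments $t\mapsto\bigl((1-t)u_1^p+t\,u_2^p\bigr)^{1/p}$, where strict convexity of the integrand in the gradient variable forces $\nabla u_1$ and $\nabla u_2$ to be proportional a.e.; combined with the normalization $\int_\Omega\beta u_i^p=1$ this directly gives $u_1=u_2$ without needing the affine-constant elimination. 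I would therefore carry out the convexity argument as the principal route and retain the Picone computation above as the mechanism that produces the pointwise equality, flagging the interior-positivity claim as the step requiring the compact embedding of Lemma \ref{embedding theorem} to ensure continuity of the eigenfunctions.
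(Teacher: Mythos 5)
Your closing paragraph --- the Belloni--Kawohl convexity argument along the path $t\mapsto\bigl((1-t)u_1^p+t\,u_2^p\bigr)^{1/p}$ --- is exactly what the paper does: it sets $u_3=\bigl(\tfrac{u_1^p+u_2^p}{2}\bigr)^{1/p}$, checks that $u_3$ is still normalized, uses convexity of $\xi\mapsto|\xi|^p$ with the weights $t(x)=u_1^p/(u_1^p+u_2^p)$ to get $w|\nabla u_3|^p\le\tfrac12\bigl(w|\nabla u_1|^p+w|\nabla u_2|^p\bigr)$, and then forces equality because both $u_1$ and $u_2$ minimize the Rayleigh quotient, whence $\nabla u_1/u_1=\nabla u_2/u_2$ a.e.\ and $u_1=cu_2$. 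So your declared ``principal route'' coincides with the paper's proof, and your sketch of it is correct. No Picone identity is used in the paper's argument at all.

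The Picone cross-testing computation that occupies most of your write-up, however, has a genuine admissibility gap that you should not paper over. The test function $u_2^p/u_1^{p-1}$ is not covered by any of the paper's lemmas in this setting: Lemma \ref{uselemma} requires the numerator to be a $C_c^\infty$ function and the denominator argument to be continuous and positive on its (compact) support; Lemmas \ref{newlemma} and \ref{uselemma1} require $w\in\mathcal{A}_s$ with $p_s>N$, whereas Theorem \ref{sbh} only assumes $w\in\mathcal{A}_t$, for which $p_s>N$ need not hold (one only has $s>N/p$, and $p_s=ps/(s+1)$ can be well below $N$). More seriously, $u_1\in X$ vanishes on $\partial\Omega$, so $u_2^p/u_1^{p-1}$ degenerates near the boundary and one would have to work with $u_2^p/(u_1+\epsilon)^{p-1}$ and pass to the limit via Fatou, as the paper does in Theorems \ref{np}, \ref{sc} and \ref{liop} --- none of which you set up. Your own worry about eliminating the additive constant $d$ in the equality case is also well placed. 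The convexity argument buys you freedom from all of this: no test-function admissibility, no $\epsilon$-regularization, no boundary analysis --- only the normalization constraint and strict convexity of the integrand in the gradient variable. Commit to that route and drop the Picone computation, or else supply the $\epsilon$-approximation it actually needs.
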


\begin{The}\label{MP}(Monotonicity property of the first eigenvalue)
Let $w\in \mathcal{A}_t$ and suppose $\Omega_{1}\subset{\Omega_{2}}$ such that $\Omega_{1}\neq{\Omega_{2}}$. Then $\lambda_{1}(\Omega_{1})>\lambda_{1}(\Omega_{2})$.
\end{The}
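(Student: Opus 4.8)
The plan is to combine a zero-extension argument (which yields the non-strict inequality) with the simplicity of the first eigenvalue (Theorem \ref{sbh}) and the strict positivity of the principal eigenfunction (which upgrades the inequality to a strict one). First I would prove that $\lambda_1(\Omega_2)\le\lambda_1(\Omega_1)$. Given any admissible $v\in W_0^{1,p}(\Omega_1,w)$ with $\int_{\Omega_1}\beta(x)|v|^p\,dx=1$, its extension $\tilde v$ by zero to $\Omega_2$ lies in $W_0^{1,p}(\Omega_2,w)$ and satisfies $\int_{\Omega_2}w(x)|\nabla\tilde v|^p\,dx=\int_{\Omega_1}w(x)|\nabla v|^p\,dx$ and $\int_{\Omega_2}\beta(x)|\tilde v|^p\,dx=1$, so $\tilde v$ is admissible for the variational problem defining $\lambda_1(\Omega_2)$. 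Taking the infimum over all such $v$ gives $\lambda_1(\Omega_2)\le\lambda_1(\Omega_1)$ immediately.

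To obtain the strict inequality I would argue by contradiction, assuming $\lambda_1(\Omega_1)=\lambda_1(\Omega_2)=:\lambda$. Here I use that for $w\in\mathcal{A}_t$ the infimum defining $\lambda_1$ is attained: by the compact embedding available for $\mathcal{A}_t$ weights together with weak lower semicontinuity of the weighted Dirichlet integral and the direct method (as recorded in \cite{Ak}), there is a nonnegative minimizer $u\in W_0^{1,p}(\Omega_1,w)$, which may be taken $u\ge 0$ since replacing $u$ by $|u|$ does not change either integral. If equality of the eigenvalues holds, then the zero-extension $\tilde u$ is simultaneously admissible and realizes $\lambda_1(\Omega_2)$, hence $\tilde u$ is itself a nonnegative, nontrivial principal eigenfunction on $\Omega_2$, solving $-\Delta_{p,w}\tilde u=\lambda\beta(x)|\tilde u|^{p-2}\tilde u$ weakly in $\Omega_2$.

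The contradiction then comes from positivity: a nonnegative nontrivial weak solution of the degenerate eigenvalue equation cannot vanish on a set of positive measure inside $\Omega_2$, whereas $\tilde u\equiv 0$ on $\Omega_2\setminus\Omega_1$, a set of positive Lebesgue measure because $\Omega_1$ and $\Omega_2$ are bounded smooth domains with $\Omega_1\subsetneq\Omega_2$. I expect this positivity step to be the main obstacle, since in the weighted degenerate setting the usual strong maximum principle is not automatic; I would invoke the Harnack inequality for solutions of degenerate equations with Muckenhoupt weights from Fabes et al \cite{EFabes}, which forces $\tilde u>0$ a.e. in $\Omega_2$ and contradicts its vanishing on $\Omega_2\setminus\Omega_1$. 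The simplicity of $\lambda_1$ (Theorem \ref{sbh}) may alternatively be used to conclude that any minimizer on $\Omega_2$ is, up to scaling, the strictly positive principal eigenfunction, again incompatible with the zero block. Either route yields $\lambda_1(\Omega_1)>\lambda_1(\Omega_2)$.
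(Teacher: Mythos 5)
Your proposal is correct in outline but reaches strictness by a genuinely different mechanism than the paper. The paper's proof is a one-line appeal to the Picone identity (Lemma \ref{GPI}) with $f(x)=x^{p-1}$ in the style of Allegretto--Huang: the non-strict inequality comes from $L(\phi,u_2)\geq 0$ tested against $C_c^\infty(\Omega_1)$ functions (dense in $W_0^{1,p}(\Omega_1,w)$), and strictness comes from the \emph{equality case} $L(u,v)=0\iff u=cv+d$, which forces the first eigenfunction of $\Omega_1$ to be a multiple of that of $\Omega_2$ on $\Omega_1$ and hence to be incompatible with the boundary conditions. You instead get the non-strict inequality by zero extension (equivalent, and arguably cleaner) and strictness by arguing that the extended minimizer would be a nonnegative eigenfunction on $\Omega_2$ vanishing on the open set $\Omega_2\setminus\overline{\Omega_1}$, contradicting a strong minimum principle. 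What the Picone route buys is that the rigidity statement is already proved in Lemma \ref{GPI} and is purely algebraic; what your route buys is that it never needs the equality analysis, only positivity of eigenfunctions.

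Two caveats on your positivity step, which you correctly identify as the crux. First, the citation is off: Fabes--Kenig--Serapioni \cite{EFabes} prove Harnack for $p=2$ with genuine Muckenhoupt $A_2$ weights, whereas this paper's classes $\mathcal{A}_p,\mathcal{A}_s,\mathcal{A}_t$ are defined only by local integrability of $w$ and of negative powers of $w$, not by the Muckenhoupt averaging condition; a Harnack inequality is therefore not automatically available here (the relevant general statement is in Heinonen et al \cite{JTO} for $p$-admissible weights, which is again a stronger hypothesis). To be fair, the paper's own argument tacitly needs positivity and continuity of the principal eigenfunction on $\Omega_2$ as well, so this is a shared weakness rather than one you introduced, but you lean on it more heavily. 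Second, your ``alternative route via simplicity'' is not really independent: Theorem \ref{sbh} only says any two nonnegative minimizers are proportional, so it identifies $\tilde u$ with \emph{the} principal eigenfunction of $\Omega_2$ without telling you that this eigenfunction is strictly positive; you still need the strong minimum principle to rule out the zero block. So the simplicity remark does not remove the obstacle, it only relocates it.
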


\begin{The}\label{liop}(Liouville Theorem)
Let $w\in \mathcal{A}_t$ with $p_s>N$. Then the inequality given by $$-\Delta_{p,w} u\geq C\beta(x)|u|^{p-2}u$$ where $\beta(x)\in [L^{\infty}(\mathbb{R}^N)]^{+}$ does not admit a positive solution in $W^{1,p}_{loc}(\mathbb{R}^N,w)$.
\end{The}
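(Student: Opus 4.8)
My plan is to reduce this global nonexistence statement to the non-existence result already established in Theorem \ref{np} (via Remark \ref{NP}), by exploiting the arbitrariness of the domain on which we test. Suppose, for contradiction, that $u\in W^{1,p}_{loc}(\mathbb{R}^N,w)>0$ solves $-\Delta_{p,w}u\geq C\beta(x)|u|^{p-2}u$ in the weak sense on all of $\mathbb{R}^N$. The strategy is to view this inequality as an instance of \eqref{np1} on a large ball: take $g_1(x)=C\beta(x)$ (which lies in $L^1$ of any bounded domain since $\beta\in L^\infty$), take $g_2\equiv 0$, and choose $f(y)=y^{p-1}\in\mathbb{M}$, so that $g_1(x)f(u)=C\beta(x)u^{p-1}=C\beta(x)|u|^{p-2}u$ and the differential inequality is exactly the statement that $u$ is a positive weak super-solution of \eqref{np1} on any bounded smooth $\Omega\subset\mathbb{R}^N$.

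The key step is then to manufacture, on a sufficiently large ball $B_R=B_R(0)$, a compactly supported test function $u_0\in X^+$ with $\operatorname{supp}u_0\subset K\Subset B_R$ for which the functional
\[
J(u_0)=\int_{B_R}w(x)|\nabla u_0|^p\,dx-\int_{B_R}C\beta(x)|u_0|^p\,dx<0 .
\]
Once such a $u_0$ is produced, Remark \ref{NP} applies verbatim (the hypotheses $f\in\mathbb{M}$, condition \eqref{subcon}, $w\in\mathcal{A}_s$ with $p_s>N$, and $g_1,g_2\in L^1$ nonnegative are all met), yielding that \eqref{np1} admits no positive weak super-solution on $B_R$. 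This contradicts the existence of our global $u$, which restricts to exactly such a super-solution on $B_R$, and the theorem follows.

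The main obstacle, and the step deserving the most care, is constructing $u_0$ so as to make $J(u_0)$ negative; this is where the hypothesis $\beta\in[L^\infty(\mathbb{R}^N)]^+$ with positive support and the freedom to take $R$ large must be used. The idea is to fix a region $A\subset\mathbb{R}^N$ of positive measure on which $\beta(x)\geq\beta_0>0$, and to build a standard plateau-type cutoff: $u_0$ equal to $1$ on a large inner ball and decaying to $0$ across an annulus of width comparable to $R$. The gradient term $\int w|\nabla u_0|^p$ is supported in the annulus and, by the weighted bound, is controlled in terms of $R$, while the potential term $\int C\beta|u_0|^p$ grows like the measure of the plateau region weighted by $\beta_0$; by taking $R\to\infty$ one arranges the gradient energy to be dominated by the potential term, forcing $J(u_0)<0$. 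One must verify that $u_0\in X^+$ (using $C_c^\infty$ density and the weighted norm) and that its support sits inside a compact $K$, so that the compact-support version in Remark \ref{NP} is applicable. I expect the growth estimate balancing the weighted Dirichlet energy against the $\beta$-potential over an expanding family of balls to be the genuinely delicate part of the argument, with the rest reducing to a direct invocation of the already-proven non-existence result.
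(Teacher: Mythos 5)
Your reduction to Remark \ref{NP} is sound and is in substance the same Picone-based mechanism the paper uses: the paper tests the inequality on a large ball $B$ against $\phi^p/(u+\epsilon)^{p-1}$, where $\phi$ is the first eigenfunction of $B$, and derives $C\leq\lambda_1(B)<C$; your version would instead feed a hand-built $u_0$ with $J(u_0)<0$ into Theorem \ref{np}/Remark \ref{NP}. These are essentially equivalent, since the existence of an admissible $u_0$ supported in $B$ with $J(u_0)<0$ is exactly the statement $\lambda_1(B)<C$, and the eigenfunction is the paper's implicit choice of such a $u_0$. So in either proof the only real content is the claim that on a sufficiently large ball the Rayleigh quotient $\int w|\nabla\cdot|^p\,dx\big/\int\beta|\cdot|^p\,dx$ drops below the prescribed constant $C$; the paper asserts this by appealing to the strict monotonicity of Theorem \ref{MP}, which by itself only says $\lambda_1(B)$ decreases as $B$ grows, not that it falls below $C$.

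That claim is precisely the step you leave as a sketch, and the sketch as written does not close. With only $\beta\in[L^{\infty}(\mathbb{R}^N)]^{+}$ and $\beta\geq\beta_0>0$ on a fixed set $A$ of positive measure, the potential term $\int C\beta u_0^p\,dx$ does \emph{not} ``grow like the measure of the plateau region'': once the plateau covers $A$ it is capped by $C\|\beta\|_{\infty}\,|\{\beta>0\}\cap B_R|$, and if $\beta$ happens to vanish outside a compact set it is bounded independently of $R$. So you cannot win by growth of the potential term; you must instead show that the annular gradient energy, of order $R^{-p}\int_{B_{2R}\setminus B_R}w(x)\,dx$, tends to zero (or at least eventually drops below $C\beta_0\int_A u_0^p\,dx$). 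In the unweighted case this follows from $p>N$, which $p_s>N$ does force since $p_s<p$; but for a general $w\in\mathcal{A}_t$ --- a class defined only by local integrability conditions --- there is no control on $\int_{B_{2R}\setminus B_R}w\,dx$, and a rapidly growing weight defeats the estimate. Hence the ``delicate step'' you flag is a genuine gap rather than a routine computation; closing it requires either a growth hypothesis on $w$ at infinity or the eigenvalue asymptotics $\lambda_1(B_R)<C$ for large $R$ that the paper also invokes without proof. The remaining bookkeeping in your proposal is fine: $f(y)=y^{p-1}$ lies in $\mathbb{M}$ and satisfies (\ref{subcon}), a function in $W^{1,p}_{loc}(\mathbb{R}^N,w)$ restricts to a super-solution in $W^{1,p}(B_R,w)$, and Remark \ref{NP} applies verbatim once $u_0$ is produced.
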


\section{Proofs of Main Theorems}

\begin{proof}[Proof of Theorem \ref{np}]
Suppose $v$ is a positive weak super-solution of (\ref{np1}).\\
Choosing $\varphi=\frac{u^p}{f(v+\epsilon)}\in{X}$ as a test function in (\ref{np2}) admissible by Lemma \ref{uselemma1}, and using Lemma (\ref{GPI}), we get
\begin{align*}
\int_{\Omega}g_1(x)\frac{f(v)}{f(v+\epsilon)}u^pdx
&\leq\int_{\Omega}\Big[w(x)|\nabla v|^{p-2}\nabla v.\nabla (\frac{u^p}{f(v+\epsilon)})-g_2(x)\frac{u^p}{f(v+\epsilon)}\Big]dx\\
&=\int_{\Omega}\Big[w(x)|\nabla u|^p-w(x)|\nabla u|^p\\
&+w(x)|\nabla v|^{p-2}\nabla v.\nabla\Big(\frac{u^p}{f(v+\epsilon)}\Big)
-g_2(x)\frac{u^p}{f(v+\epsilon)}\Big]dx\\
&=\int_{\Omega}\Big[w(x)|\nabla u|^p-\int_{\Omega}R(u,v+\epsilon)-\int_{\Omega}g_2(x)\frac{u^p}{f(v+\epsilon)}\Big]dx
\end{align*}
Then Fatou's lemma yields $$J(u)\geq\int_{\Omega}R(u,v)\geq 0$$
Hence the theorem.
Remark \ref{NP} follows similarly by choosing $\frac{u^p}{f(v)}$ as a test function admissible by Lemma (\ref{newlemma}).
\end{proof}

\begin{proof}[Proof of Theorem \ref{har}]
Using $\frac{u^p}{f(v)}\in X$ as a test function in (\ref{evp}) which is admissible by Lemma \ref{uselemma}, we have
\begin{equation}\label{HD}
\int_{\Omega}g(x)u^p dx\leq\int_{\Omega}w(x)|\nabla v|^{p-2}\nabla v.\nabla \Big(\frac{u^p}{f(v)}\Big)dx.
\end{equation}
By Lemma \ref{GPI} we have 
\begin{equation}\label{hd}
\int_{\Omega}R(u,v)dx=\int_{\Omega} w(x)(|\nabla u|^p-|\nabla v|^{p-2}\nabla v.\nabla\Big(\frac{u^p}{f(v)}\Big)dx\geq 0
\end{equation}
Using the inequality (\ref{HD}) in (\ref{hd}) we get the inequality (\ref{Hardy}). Hence the theorem follows.\\
Remark \ref{Hardy remark} follows by choosing $\frac{u^p}{f(v)}$ as a test function admissible by Lemma (\ref{newlemma}).
\end{proof}

\begin{proof}[Proof of Theorem \ref{sc}]
Since $w_2\leq w_1$ and $u\in W^{1,p}(\Omega,w_1)$ we have
$$
\int_{\Omega}|u|^p dx+\int_{\Omega}w_{2}|\nabla u|^p dx\leq\int_{\Omega}|u|^p dx+\int_{\Omega}w_{1}|\nabla u|^p dx<+\infty
$$
which implies $u\in W^{1,p}(\Omega,w_2)$.
Without loss of generality let $v>0$ a.e. in $\Omega$. The case $v<0$ can be dealt similarly by considering the function $-v$. Since $w_2\in \mathcal{A}_s$
with $p_s>N$ using $\frac{u^p}{(v+\epsilon)^{p-1}}\in W_{0}^{1,p}(\Omega,w_2)$ as a test function in (\ref{sc2}) admissible by Lemma (\ref{uselemma1}) we get
\begin{equation}\label{ST}
\int_{\Omega} w_{2}|\nabla v|^{p-2}\nabla v.\nabla\Big(\frac{u^p}{(v+\epsilon)^{p-1}}\Big)dx=\int_{\Omega} f_{2}(x)\Big(\frac{v}{v+\epsilon}\Big)^{p-1}u^p dx
\end{equation}
Taking $u$ as a test function in (\ref{sc1}) we get
\begin{equation}\label{st}
\int_{\Omega} w_{1}|\nabla u|^p dx=\int_{\Omega}f_{1}(x)u^p dx
\end{equation}
By Lemma \ref{GPI} and using (\ref{ST}), (\ref{st}), we get
\begin{multline*}
0\leq\int_{\Omega}L(u,v+\epsilon)dx
=\int_{\Omega}R(u,v+\epsilon)dx\\
=\int_{\Omega}w_1(x)|\nabla u|^p dx-\int_{\Omega}w_1(x)|\nabla v|^{p-2}\nabla v.\nabla \Big(\frac{u^p}{(v+\epsilon)^{p-1}}\Big)dx\\
\leq\int_{\Omega}w_1(x)|\nabla u|^p dx-\int_{\Omega}w_2(x)|\nabla v|^{p-2}\nabla v.\nabla\Big (\frac{u^p}{(v+\epsilon)^{p-1}}\Big)dx\\
=\int_{\Omega}f_1(x)u^p dx-\int_{\Omega}f_2(x)(\frac{v}{v+\epsilon})^{p-1}u^p dx
\end{multline*}
Using Fatou's lemma we get 
$$\int_{\Omega}(f_1(x)-f_2(x))u^p dx\geq 0$$
which gives a contradiction since $f_1<f_2$ and $u\geq 0$. Hence $v$ must change sign.
\end{proof}

\begin{proof}[Proof of Theorem \ref{sbh}]
Note that the first eigenvalue $\lambda_1$ of the operator $-\Delta_{p,w}$ is given by the minimizer of the functional
$$
J_{p,w}(v)=\int_{\Omega}w(x)|\nabla v|^p dx\;\;\text{on}\;\;K=\{v\in X:\int_{\Omega}\beta(x)|v|^p dx=1\}.
$$
Let us suppose $u_1$ and $u_2$ be two non-negative eigenfunction corresponding to the first eigenvalue $\lambda_1$. Therefore we have
$$
\int_{\Omega}\beta(x)|u_1(x)|^p dx=1
$$ and
$$
\int_{\Omega}\beta(x)|u_2(x)|^p dx=1.
$$
Choosing $u_3(x)=U^{\frac{1}{p}}(x)$ for $U(x)=\frac{u_1^p(x)+u_2^p(x)}{2}$ we get $\int_{\Omega}\beta(x)|u_3(x)|^p dx=1.$
Now for $t(x)=\frac{u_1^p}{u_1^p+u_2^p}(x)\in(0,1)$, we have
\begin{align*}
w(x)|\nabla u_3|^p 
&=w(x)U^{1-p}|\frac{1}{2}(u_1^{p-1}\nabla u_1+u_2^{p-1}\nabla u_2)|^p\\
&=w(x)U|\frac{1}{2}(\frac{u_1^p}{U}\frac{\nabla u_1}{u_1}+\frac{u_2^p}{U}\frac{\nabla u_2}{u_2})|^p\\
&=w(x)U|t(x)\frac{\nabla u_1}{u_1}+(1-t(x))\frac{\nabla u_2}{u_2}|^p\\
&\leq w(x)U\{t(x)|\frac{\nabla u_1}{u_2}|^p+(1-t(x))|\frac{\nabla u_1}{u_1}|^p\}\\
&=\frac{w(x)}{2}(|\nabla u_1|^p+|\nabla u_2|^p)\\
&=\frac{1}{2}(w(x)|\nabla u_1|^p+w(x)|\nabla u_2|^p)
\end{align*}
Therefore we obtain
\begin{equation}\label{ineq}
\int_{\Omega} w(x)|\nabla u_3(x)|^p dx\leq\frac{1}{2}(\int_{\Omega} w(x)|\nabla u_1(x)|^p dx+\int_{\Omega} w(x)|\nabla u_2(x)|^p dx)
\end{equation} 
Note that since both $u_1$ and $u_2$ are minimizers the equality in (\ref{ineq}) must hold i.e,
$$
\frac{\nabla u_1}{u_1}=\frac{\nabla u_2}{u_2}\;\;\text{a.e.in}\;\;\Omega
$$ which implies $$u_1=cu_2\;\;\text{a.e.in}\;\;\Omega$$ Hence $\lambda_1$ is simple.
\end{proof}

\begin{proof}[Proof of Theorem \ref{MP}]
Note that $C_{c}^{\infty}(\Omega_{1})$ is dense in $W_{0}^{1,p}(w,\Omega_1)$ provided $w\in\mathcal{A}_p$. The assumption $w\in\mathcal{A}_t$ is required to guarantee the existence of first eigenfunction. Using the above information and putting $f(x)=x^{p-1}$ in Lemma \ref{GPI} as in Allegretto-Huang \cite{AlHu} we have our result.  
\end{proof}

\begin{proof}[Proof of Theorem \ref{liop}] 
Note that from Drabek et al \cite{Ak}, there exists the least an eigenvalue $\lambda_1>0$ and at least one corresponding eigenfunction $u_1\geq 0$ a.e. in $\Omega\;(u_1\not\equiv 0)$ of the eigenvalue problem (\ref{ev}). Also from Theorem \ref{MP} we have 
$$\lambda_1(\Omega_2)<\lambda_1(\Omega_1)\;\;\mbox{provided}\quad\Omega_1\subset\Omega_2\;\;\text{with}\;\;\Omega_1\neq\Omega_2$$
Given $C>0$ one can choose a sufficiently large ball $B$ of radius $r>0$ and center at $y$ such that $\lambda_1(B)<C$, where $\phi$ is the non-negative eigenfunction corresponding to the first eigenvalue $\lambda_1(B)$ . 

Therefore we have
$$
-\Delta_{p,w}\phi=\lambda_1(B)\beta(x)\phi^{p-1}
$$
Let $u$ be a positive solution of 
\begin{equation}\label{B}
-\Delta_{p,w} u\geq C\beta(x)u^{p-1}\;\;\text{in}\;\;B
\end{equation}
and by Lemma \ref{uselemma1} choosing $\frac{\phi^p}{(u+\epsilon)^{p-1}}\in W_{0}^{1,p}(B,w)$ as a test function in (\ref{B}) we obtain after using Lemma \ref{GPI}
\begin{multline*}
C\int_{B}\beta(x)(\frac{u}{u+\epsilon})^{p-1}\phi^p dx-\int_{B}w(x)|\nabla\phi|^p dx\\
\leq\int_{B}w(x)|\nabla u|^{p-2}\nabla{u}.\nabla(\frac{\phi^p}{(u+\epsilon)^{p-1}})dx-\int_{B}w(x)|\nabla u|^p dx
\leq 0
\end{multline*}
Letting $\epsilon\to 0^+$ using Fatou's Lemma we get
$$
C\leq\frac{\int_{B}w(x)|\nabla\phi|^p}{\int_{B}\beta(x)\phi^p}dx=\lambda_{1}(B)<C
$$ which is a contradiction. This completes the proof. 
\end{proof}

\section*{Acknowledgement:}
The author was supported by NBHM Fellowship No: 2-39(2)-2014-NBHM-RD-II-8020-June 26, 2014. The author would like to thank Dr. Kaushik Bal for the fruitful discussions and suggestions on the topic.\\
On behalf of all authors, the corresponding author states that there is no conflict of interest.

Prashanta Garain\\
Department of Mathematics and Statistics\\
Indian Institute of Technology, Kanpur\\
Uttar Pradesh-208016, India\\
e-mail: pgarain@iitk.ac.in
\end{document}